\newtheorem{theorem}{Theorem}[section]
\newtheorem{lemma}[theorem]{Lemma}
\newtheorem{propos}[theorem]{Proposition}
\newtheorem{corol}[theorem]{Corollary}
\theoremstyle{remark}
\newtheorem{defin}[theorem]{Definition}
\newtheorem{rmk}[theorem]{Remark}
\numberwithin{equation}{section}
\newcommand{\dive}{{\rm{div}\,}}
\newcommand{\loc}{ \rm{loc}}
\newcommand{\La}{\mathcal{L}}
\newcommand{\var}{\varepsilon} 
\newcommand{\varp}{\varphi} 
\newcommand{\w}{\omega}
\newcommand{\curl}{{\rm curl  } \, }
\newcommand{\p}{\partial} 
\newcommand{\IR}{\mathbb{R}}
\newcommand{\irn}{\IR^N}
\newcommand{\ID}{\mathcal{D}}
\newcommand{\IS}{\mathcal{S}}
\begin{document}

\bibliographystyle{plain}

\title[Lagrangian solutions to the Euler system with $L^1$ vorticity]
{Lagrangian solutions to the 2D Euler system \\ with $L^1$ vorticity and infinite energy}

\author[A. Bohun]{Anna Bohun}
\address{Anna Bohun, Departement Mathematik und Informatik, Universit\"at Basel, 
Spiegelgasse 1, CH-4051, Basel, Switzerland}
\email{anna.bohun@unibas.ch}
\author[F. Bouchut]{Fran\c{c}ois Bouchut}
\address{Fran\c{c}ois Bouchut, Universit\'e Paris-Est, Laboratoire d'Analyse et de Math\'ematiques Appliqu\'ees (UMR 8050),
CNRS, UPEM, UPEC, F-77454, Marne-la-Vall\'ee, France}
\email{francois.bouchut@u-pem.fr} 
\author[G. Crippa]{Gianluca Crippa}
\address{Gianluca Crippa, Departement Mathematik und Informatik, Universit\"at Basel, 
Spiegelgasse 1, CH-4051, Basel, Switzerland}
\email{gianluca.crippa@unibas.ch}

\begin{abstract}
We consider solutions to the two-dimensional incompressible Euler system with only integrable
vorticity, thus with possibly locally infinite energy. With such regularity, we use the recently developed theory of Lagrangian flows associated
to vector fields with gradient given by a singular integral in order to define Lagrangian solutions,
for which the vorticity is transported by the flow.
We prove strong stability of these solutions via strong convergence of the flow, under the only assumption
of $L^1$ weak convergence of the initial vorticity. The existence of Lagrangian solutions
to the Euler system follows for arbitrary $L^1$ vorticity.
Relations with previously known notions of solutions are established.
\end{abstract}

\maketitle

\noindent{\bf Keywords:} Euler system, L1 vorticity, Lagrangian solutions, symmetrized solutions, infinite energy

\section{Introduction}

The incompressible Euler equations for a two-dimensional inviscid fluid are given by
\begin{align}\label{ve1}
\begin{cases}
& \p_t v + \dive(v\otimes v)  +\nabla p =0, \\
& v(0,\cdot)=v^0(x), \\
& \dive v =0 ,
\end{cases}
 \end{align}
where $v(t,x)$ is the velocity of particles at position $x$ and time $t$, and $p(t,x)$ the scalar pressure, that sustains the incompressibility constraint $\dive v = 0$.
The two-dimensional incompressible Euler equations may be rewritten as a transport equation for the scalar vorticity $\w$ defined by
\begin{equation}\label{v}
\w= \curl  v \equiv \partial_1 v_2-\partial_2 v_1,
\end{equation}
which is advected by the velocity $v$. 
This gives the vorticity formulation
\begin{equation}\label{vorticityformsmooth}
\begin{aligned}
\begin{cases}
&\p_t\w+ {\dive}(\w v)=0, \\
&\w(0,\cdot)=\w^0(x),
\end{cases}
\end{aligned}
\end{equation}
with $\w^0 = \curl v^0$.
The coupling \eqref{v} can alternatively be written via the Biot-Savart convolution law
\begin{equation}\label{biotsavart}
v(t,x) =\frac{1}{2\pi}\int\limits_{\IR^2}\frac{(x-y)^\perp}{|x-y|^2}\w(t,y) \, dy = K\mathop{*}_{x}  \w,
\end{equation}
where we denote by $(x_1,x_2)^\perp=(-x_2,x_1)$ and by
\begin{equation}\label{biotsavartkernelK} 
K(x)=\frac{1}{2\pi}\frac{x^\perp}{|x|^2}=\frac{1}{2\pi}\left(\frac{-x_2}{|x|^2},\frac{x_1}{|x|^2}\right)
\end{equation}
the Biot-Savart kernel.\\

In this paper we deal with the existence and stability of infinite kinetic energy solutions associated to initial vorticities lying in $ L^1(\IR^2)$.
In this context, because of the lack of (even local) kinetic energy bound, the velocity formulation \eqref{ve1} cannot be given the usual distributional meaning (see Definition~\ref{velocityform}).
Though, a {\em symmetrized velocity formulation} can be used, see Definition~\ref{symmetrization}.

For vorticities $\omega \in L^\infty((0,T);L^1(\IR^2))$, the decomposition
\begin{equation}\label{vdecomp}
v  =  K_1 * \w + K_2 * \w ,
\end{equation}
where $K_1 = K \, \mathbf{1}_{B_1(0)} \in L^1(\IR^2)$ and $K_2 = K\,\mathbf{1}_{B_1(0)^c} \in L^\infty(\IR^2)$,  gives immediately with Young's inequality
that $v\in L^\infty((0,T);L^1(\IR^2))+L^\infty((0,T);L^\infty(\IR^2))$.
Nevertheless, as for the velocity formulation, no direct distributional formulation is available for the vorticity equation \eqref{vorticityformsmooth},
since the factors in the product $\w v$ are not summable enough to define a locally integrable product.
The symmetrized formulation can however be used again.
More generally, one can consider three alternate formulations of weak solutions for the vorticity equation, defined as follows.
\begin{enumerate}
\item \emph{Renormalized} solutions \cite{renorm}, defined by the requirement that $\beta(\w)$ is a distributional solution to the transport equation \eqref{vorticityformsmooth} for a suitable class of functions $\beta$:
\begin{equation}
\begin{cases}
\begin{aligned} 
& \p_t(\beta(\w))+\dive (\beta(\w)v)=0,\\
& \beta(\w)(0,\cdot)=\beta(\w^0),
\end{aligned}
\end{cases}
\end{equation}
\item \emph{Symmetrized vorticity} solutions \cite{delort,vecchiwu,schochet}, defined by exploiting the  antisymmetry of the Biot-Savart kernel $K$, so that multiplying \eqref{vorticityformsmooth} by a test function $\phi$ and integrating gives the formulation
\begin{equation} \label{symintro}
\int^T_0 \int\limits_{\IR^2} \partial_t\phi(t,x)\w(t,x)\,dxdt-\int^T_0 \int\limits_{\IR^2} \int\limits_{\IR^2}  {H}_\phi(t,x,y)\w(t,x)\w(t,y)\,dxdydt +\int\limits_{\IR^2} \phi(0,x)\w^0(x)\,dx=0,
\end{equation}
where ${H}_\phi$ is the bounded function
\begin{equation}
	{H}_\phi(t,x,y)=-\frac{1}{2}K(x-y)\cdot\bigl(\nabla\phi(t,x)-\nabla\phi(t,y)\bigr),
	\label{eq:h_phi}
\end{equation}
\item \emph{Lagrangian} solutions, i.e.~solutions $\w$ transported by a suitable flow associated to the velocity $v$, to be precisely defined in the sequel.
\end{enumerate}
The notions (1) and (2) of solutions have been considered previously, but no study has
been made concerning (3) in our low regularity context $\w\in L^1$.
In this paper we prove that initial vorticities in $L^1$ give rise to well-defined weak solutions that are transported by flows.  
The key point of our strategy relies on a priori error estimates (under bounds that are natural in our setting) 
for a class of flows which are measure preserving, called \emph{regular Lagrangian flows}.
These estimates were developed in \cite{jhde}.
The novelty of this approach for the Euler equations, in contrast with \cite{delort,vecchiwu,indiana,evansmueller},  is that it entirely relies on the Lagrangian formulation, and therefore proves existence of solutions which are naturally associated to flows. In this setting we also allow for velocities with locally infinite kinetic energy.

The usual strategy for proving existence of solutions to \eqref{ve1} is by smoothing the initial data, and using estimates that enable passing to the limit in the weak formulation.
For initial velocities belonging to $H^s, s>2$, well-posedness of classical solutions is due to Wolibner \cite{wolibner}. Existence and uniqueness of solutions to \eqref{ve1} is known for vorticities in $L^1\cap L^\infty$, and was first proved by Yudovich \cite{yudovich}. For compactly supported initial vorticities in $L^p$, with $1<p<\infty$, existence was first proved by DiPerna and Majda \cite{dipernamajda1}. The proof relies on suitable Sobolev embeddings, that guarantee strong convergence in $L^2_{\loc}$ on the approximate velocities.

On the other hand, while a uniform $L^1$ bound on the vorticities is still sufficient to guarantee the $L^1_{\loc}$ convergence of the smoothed velocities, it is generally insufficient for the strong convergence in $L^2_{\loc}$, see for instance Example~11.2.1 in \cite{Majda}: the approximate velocities may concentrate.
However, concentrations may occur for sequences  whose limit \emph{still} satisfies \eqref{velocityform}, in spite of the lack of strong $L^2_{\loc}$ convergence: this is referred to as concentration-cancellation and has been studied in \cite{dipernamajdaconcel,Majda}. 
This happens for instance if the vorticity is a measure with distinguished sign \cite{delort}.
The key point in proving that concentration-cancellations occur is to prove distributional convergence of the antisymmetric quantities $v^1_n v^2_n$ and $v^1_n-v^2_n$.
For $L^1$ vorticities with compact support, without  necessarily distinguished sign, and initial velocities with locally finite kinetic energy, the propagation of the equi-integrability guarantees concentration-cancellations \cite{vecchiwu}.
However, these solutions were not proved to be Lagrangian, and only weak $L^1$ convergence was obtained on $\w$ even for strongly convergent initial data.
This is nevertheless sufficient to pass to the limit in the symmetrized formulation \eqref{symintro}.\\

A stability estimate for flows associated to velocity fields with gradient given by the singular integral of an $L^1$ function was derived in \cite{jhde}, building on previous results in \cite{cdel}.
This regularity of the field is weaker than the one classically used, namely $W^{1,1}$ or BV \cite{renorm,ambrosio}.
Our assumptions in the context of Euler equations fall under the theory in \cite{jhde}. From this theory it follows that Lagrangian flows associated to velocities whose curl are equi-integrable are strongly precompact, and thus stable under approximation, so that the limit flow solves the ODE with the limit velocity. 
We shall therefore conclude that vorticities in $L^1$ are strongly stable under approximation, in the sense that if $\w^0_n$ converges strongly in $L^1$ to  $\w^0$, then the solution $\w_n$ of the corresponding vorticity formulation converges strongly in $L^1$ to a \emph{Lagrangian} solution $\w$.
Additionally, even for weakly convergent initial vorticities, the flow always converges strongly.
The main results of this paper were announced in \cite{BCX}.

A classical difficulty in proving strong compactness is related to time oscillations.
Indeed, when dealing with velocity formulations, the strong compactness in space follows from
the $L^1$ bound on the vorticity, but the compactness in time relies on bounds on $\p_t v_n$ in $L^\infty_t({\mathcal D}'_x)$ in order for Aubin-Lions' lemma to apply. Without the assumption $v\in L^2_{\loc}$, we do not have such  regularity in time of $v$ and we cannot apply Aubin-Lions' lemma.
We thus propose a refinement of the stability estimates in \cite{jhde} so that weak time convergence of the velocities is still sufficient for the stability of regular Lagrangian flows.
We nevertheless prove a posteriori the strong compactness of $v$ in time and space.

\subsection*{Main notations}
We set $B_R:=B_R(0)$. We denote by $L^0(\IR^d)$ the space of all measurable real valued functions on $\IR^d$, defined a.e.~with respect to the Lebesgue measure, endowed with the convergence in measure. We denote by $L^0_{\loc}(\IR^d)$ the same space, endowed with local convergence in measure (see definition below). The space $\log L(\IR^d)$ contains all functions $u:\IR^d\to \IR$ such that $\int_{\IR^d} \log(1+|u(x)|) \, dx< \infty$, with $\log L_{\loc}(\IR^d)$ defined accordingly. We refer to $\mathcal{B}(E,F)$ as the space of bounded functions between sets $E$ and $F$.
We also introduce the following seminorm:
\begin{defin}
Let $u$ be a measurable function on $\Omega\subset \IR^d$. For $1\leq p<\infty$, we set
$$
|||u|||^p_{M^p(\Omega)}=\sup_{\lambda>0} \Big\{\lambda^p \La^d \big(\{ x \in \Omega \; : \; |u(x)|>\lambda\} \big) \Big\}
$$
and define the weak Lebesgue space $M^p(\Omega)$ as the space consisting of all such measurable functions $u:\Omega\rightarrow \IR$ with $|||u|||_{M^p(\Omega)}<\infty$. For $p=\infty$, we set $M^\infty(\Omega)=L^\infty(\Omega)$.
\end{defin}

\begin{defin} We say that a sequence of measurable functions $u_n: \IR^d\to\IR$ converges locally in measure in $\IR^d$ to a measurable function $u:\IR^d\to\IR$ if for every $\gamma>0$ and every $r>0$ there holds
$$\La^N(\{x\in B_r: |u_n(x)-u(x)|>\gamma\})\to 0, \qquad n\to \infty.$$
\end{defin}

\section{Regularity of the velocity field}\label{regv}
We summarize in the present section some integrability and regularity estimates for the vector field $v$ given by \eqref{biotsavart}, when $\w\in L^1(\IR^2)$. 

\begin{enumerate}[(I)]
\item The Biot Savart kernel $K$ belongs to $ L^1_{\loc}(\IR^2)$ and has distributional derivatives given by the following singular kernels. For $i,j=1,2$, we have
\begin{equation}\label{dK}
\p_{j} K^i(x) = \p_{j}\frac{1}{2\pi} \left(\frac{-x_2}{|x|^2},\frac{x_1}{|x|^2}\right)_i.
\end{equation}
The Fourier transform of \eqref{dK} is bounded and is given by
\begin{equation}
\widehat{ \p_{j} K^i}(\xi) =  \xi_{j} \left(\frac{-\xi_2}{|\xi|^2},\frac{\xi_1}{|\xi|^2}\right)_i \in L^\infty(\IR^2).
\end{equation}
It is well-known that the operators $S^i_j$ of convolution with such kernels \eqref{dK}
defined by $S^i_ju=\p_{j} K^i*u$ extend to bounded operators on $L^2(\IR^2)$, and have bounded extensions on $L^p(\IR^2)$ for $1<p<\infty$.
For $p=1$ and $u\in L^1(\IR^2)$, $S^i_ju$ is a tempered distribution $S^i_j u\in \IS'(\IR^2)$ defined via the formula
\begin{equation} 
\langle S^i_j u,\varphi\rangle=\langle u,\tilde{S}^i_j\varp\rangle \qquad
\forall \varp\in\IS(\IR^2),
\end{equation}
where $\tilde{S}^i_j$ is the singular integral operator associated to the kernel $(\p_{j} K^i)(-x)$.
Thus for $v$ given by \eqref{biotsavart} with $\w\in L^1(\IR^2)$ and for $i,j=1,2$, we have 
\begin{equation}\label{Sw}
(\nabla v)_{ij}= \p_{j} v^i=S_j ^i\w\in \IS'(\IR^2).
\end{equation}
\item
{}From \eqref{Sw} it follows that
 $\dive v=0$ in $\ID'$, and $\curl v=\w$ in $\ID'$.
\item 
We have from \eqref{vdecomp} that vorticities bounded in $L^\infty((0,T);L^1(\IR^2))$ are associated to velocities bounded in $   L^\infty((0,T);L^1(\IR^2))+ L^\infty((0,T);L^\infty(\IR^2)).$
Moreover, the weak Hardy-Littlewood-Sobolev inequality (see Lemma 4.5.7 in \cite{hormander}) gives that
\begin{equation}\label{M2}
 \|v\|_{L^\infty((0,T);M^2( \IR^2))} \leq  c \|\w \|_{L^\infty((0,T); L^1(\IR^2))} ,
\end{equation}
which implies in particular the embedding
$v(t,x)\in   L^p_{\loc}([0,T]\times \IR^2)$ for any $1\leq p<2$.
\end{enumerate}

\section{Weak solutions}\label{weaksoleulerl1}
Several weak formulations can be considered. If the velocity has locally finite kinetic energy, $v \in L^2_{\loc}(\IR^2)$, the usual weak formulation of \eqref{ve1} is available: 

\begin{defin}[Weak velocity formulation]\label{velocityform} 
We say that $v\in L^\infty((0,T);L^2_{\loc}(\IR^2) )$ is a weak solution of the Euler velocity formulation with initial datum $v^0\in L^2_{\loc}(\IR^2)$ if for all $\phi(t,x)\in C_c^1([0,T)\times\IR^2,\IR^2)$ with $\dive\phi =0$, there holds
\begin{equation}\label{distributionalform}
 \int_0^T \int\limits_{\IR^2} \partial_t\phi \cdot v + \nabla\phi :\bigl(v\otimes v\bigr) \, dxdt+ \int\limits_{\IR^2} \phi(0,x)\cdot v ^0(x) \, dx =0,
\end{equation}
and $v$ is divergence free in distributional sense.
\end{defin}

\subsection{Symmetrized velocity solutions.}
In order to deal with solutions with locally infinite kinetic energy we can propose a weaker formulation than the one in Definition~\ref{velocityform}.
It is in the same spirit as the symmetrized vorticity formulation \eqref{symintro}.
Using the identity $\dive(v \otimes v ) =v\cdot\nabla v= \w \, v^\perp  + \nabla\frac{ |v|^2}{2}$,
that is valid when $\dive v=0$, we can formally rewrite \eqref{ve1} as
\begin{equation} 
\p_t v+\w  v^\perp +\nabla p' =0,
\label{eq:vmodified}
\end{equation}
where $p'=p+\frac{|v|^2}{2}$. This modified pressure $p'$ can be eliminated by
taking suitable test functions as in \eqref{distributionalform}.
With this form \eqref{eq:vmodified} we can observe that only the quantities $v_1v_2$
and $v_1^2-v_2^2$ need to be in $L^1$, since we can write
$\w  v^\perp=\dive(v\otimes v-(|v|^2/2){\mathrm Id})$,
and the entries of the matrix $v\otimes v-(|v|^2/2){\mathrm Id}$ are just these two scalars
$v_1v_2$ and $v_1^2-v_2^2$.
However, without such assumptions, we observe that the term $\w v^\perp$ has a priori no pointwise meaning
when $\w$ only belongs to $L^p$ for some $p<4/3$, since in such a case $\omega$ and $v$ would not have conjugate summabilities.
Nevertheless, with the only assumption $\w\in L^1$, that yields $v\in M^2$ (but $v\not\in L^2_{\loc}$ in general),
we can give a meaning in distribution sense to this term by exploiting the symmetrization technique analog to that in \cite{delort,vecchiwu},
that uses the antisymmetry property $K(-x)=-K(x)$.

Let $\phi\in C_c^1([0,T)\times\IR^2,\IR^2)$. Then using the Biot-Savart law we can write
\begin{equation}\label{symm}
 \begin{aligned} 
& \int^T_0 \int\limits_{\IR^2} (\w v^\perp)(t,x) \cdot\phi(t,x) \, dxdt  \\
 & = \int^T_0\int\limits_{\IR^2}\int\limits_{\IR^2}\w(t,x) \w(t,y)K(x-y)^\perp\cdot\phi(t,x) \, dx dy dt\\
 & =   -\int^T_0\int\limits_{\IR^2}\int\limits_{\IR^2} \w(t,y)  \w(t,x)K(x-y)^\perp\cdot\phi(t,y)  \,dx dy dt  \\
 & = \frac{1}{2}\int^T_0\int\limits_{\IR^2} \int\limits_{\IR^2}  \w(t,x)\w(t,y)K(x-y)^\perp  \cdot\bigl(\phi(t,x)- \phi(t,y)\bigr) \, dx dy dt\\
 &=\int^T_0\int\limits_{\IR^2} \int\limits_{\IR^2}  \w(t,x)\w(t,y) \bar{H}_{\phi}(t,x,y) \,dx dydt,
 \end{aligned}
 \end{equation}
where $\bar{H}_\phi(t,x,y)$ is the function on $[0,T)\times\IR^2\times\IR^2$ given by  
\begin{equation}\label{barh}
\bar{H}_\phi(t,x,y)= \frac{1}{2} K(x-y)^\perp \cdot\bigl( \phi(t,x)- \phi(t,y)\bigr).
\end{equation} 
For $\phi\in C^1_c([0,T)\times\IR^2,\IR^2)$ we have that $\bar{H}_\phi$ is a bounded function, continuous outside the diagonal, that tends to zero at infinity.  Indeed we have
\begin{equation}
	|\bar{H}_\phi(t,x,y)| \leq \frac{1}{4\pi }  \textrm{Lip}( \phi(t,\cdot)).
	\label{eq:estHphi}
\end{equation}
Thus for vorticities belonging to $L^\infty((0,T);L^1(\IR^2))$, the last integral in \eqref{symm} is well-defined. 
This motivates the next definition of weak solutions.
\begin{defin}[Symmetrized velocity formulation]\label{symmetrization}
Let $(\w^0,v^0)\in  L^1(\IR^2)\times M^2(\IR^2)$, with $\w^0=\curl v^0$. 
We say that the couple $(\w,v)$ is a symmetrized velocity solution of \eqref{ve1} 
in $[0,T)$ with initial datum $(\w^0,v^0)$, if
\begin{enumerate}
\item $\w\in L^\infty((0,T);L^1(\IR^2))$,
\item the velocity field $v$ is given by the convolution in \eqref{biotsavart},
\item for all test functions $\phi\in C^1_c([0,T)\times \IR^2,\IR^2)$ with $\dive\phi=0$, we have
\begin{equation}\label{weakformh}
\int^T_0 \int\limits_{\IR^2} \partial_t\phi\cdot v \, dxdt
-\int^T_0 \int\limits_{\IR^2} \int\limits_{\IR^2} \bar{H}_\phi(t,x,y)\w(t,x)\w(t,y) \,dxdydt 
+\int\limits_{\IR^2} \phi(0,x)\cdot v^0(x) \, dx=0,
\end{equation}
where $\bar{H}_\phi$ is the function on $[0,T)\times\IR^2\times\IR^2$ given by  \eqref{barh}.
\end{enumerate}
\end{defin}

\subsection{Three formulations of the vorticity equation}\label{vorticityformulations}

According to the introduction, we now define three notions of solution to the vorticity formulation \eqref{vorticityformsmooth} when the vorticity is only $L^1$ summable. 
Since we do not assume $v^0\in L^2_{\loc}(\IR^2)$, we deal with velocities that belong to $M^2(\IR^2)$, a consequence of the Hardy-Littlewood inequality \eqref{M2}.

\begin{defin}[Renormalized solutions]\label{renorm} Let $(\w^0,v^0)\in  L^1(\IR^2)\times M^2(\IR^2)$ with $\w^0= \curl v^0$.  We say the couple $(\w,v)$ is a renormalized solution to \eqref{vorticityformsmooth} with initial data $(\w^0,v^0)$, if
\begin{enumerate}
\item $\w\in L^\infty((0,T);L^1(\IR^2))$,
\item the velocity field $v$ is given by the convolution in \eqref{biotsavart},
\item for every nonlinearity $\beta\in C^1(\IR)$ with $\beta$ bounded, we have that
\begin{equation}
\begin{cases}
\begin{aligned} 
& \p_t(\beta(\w))+\dive (\beta(\w)v)=0,\\\
& \beta(\w)(0,\cdot)=\beta(\w^0) 
\end{aligned}
\end{cases}
\end{equation}
hold in the sense of distributions. 
\end{enumerate}
\end{defin}
For smooth solutions this is equivalent to the classical notion of solution (as can be seen by multiplying the equation by $\beta'(\w)$ and applying the chain rule.) This formulation derives from the classical DiPerna-Lions \cite{renorm} framework for transport equations.

\begin{defin}[Symmetrized vorticity formulation]\label{symvort} As mentioned in the introduction, the symmetrization technique for the term $\dive(\w v)$ provides a second formulation of the vorticity equation.
Let $\phi\in C_c^2([0,T)\times\IR^2)$. Computations as in \eqref{symm} give  
\begin{equation}\label{sym}
 \begin{aligned} 
& \int^T_0 \int\limits_{\IR^2} \dive(\w v)(t,x) \phi(t,x) \,  dxdt =\int^T_0\int\limits_{\IR^2} \int\limits_{\IR^2}  H_\phi(t,x,y)\w(t,x)\w(t,y)\, dx dydt,
\end{aligned}
\end{equation}
with
\begin{equation}\label{h}
H_{\phi}(t,x,y)= -\frac{1}{2} K(x-y)\cdot\bigl(\nabla\phi(t,x)-\nabla\phi(t,y)\bigr).
\end{equation} 
We say that $(\w,v)$ is a symmetrized vorticity solution to \eqref{vorticityformsmooth} if (1), (2) above are satisfied
and if for all test functions $\phi\in C^2_c([0,T)\times \IR^2)$ there holds
\begin{equation}\label{weakform2}
\int^T_0 \int\limits_{\IR^2} \partial_t\phi(t,x)\w(t,x) \, dxdt-\int^T_0 \int\limits_{\IR^2} \int\limits_{\IR^2} {H}_\phi(t,x,y)\w(t,x)\w(t,y) \,dxdydt +\int\limits_{\IR^2} \phi(0,x)\w^0(x) \, dx=0.
\end{equation}
\end{defin}

\begin{propos}\label{prop equiv-sym-weak}
We have the following equivalence of notions of solutions to the Euler system.
\begin{enumerate}
\item Symmetrized velocity solutions (Definition~\ref{symmetrization})
are symmetrized vorticity solutions (Definition~\ref{symvort}), and conversely.
\item If $(\w,v)$ is such that $v\in L^\infty((0,T);L^2_{\loc}(\IR^2))$,
then it is a symmetrized velocity solution if and only if it is a weak velocity solution (Definition \ref{velocityform}).
\end{enumerate}
\end{propos}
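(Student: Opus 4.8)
The plan is to treat the two equivalences separately; in each case the algebra is short and the real work is justifying an integration by parts at the low regularity available. Throughout I will use the facts recorded in Section~\ref{regv}: for $v=K*\w$ one has $\dive v=0$ and $\curl v=\w$ in $\ID'(\IR^2)$, and $v,v^0\in L^1_{\loc}(\IR^2)$, so the distributional curl may be paired with smooth compactly supported test functions. The main device for part~(1) is the two-dimensional stream function. I would first set up the correspondence between test functions: to a scalar $\psi\in C^2_c([0,T)\times\IR^2)$ I associate the divergence-free field $\phi=\nabla^\perp\psi=(-\p_2\psi,\p_1\psi)\in C^1_c([0,T)\times\IR^2,\IR^2)$, and conversely, given a divergence-free $\phi\in C^1_c$ I recover a scalar potential by $\psi(t,x_1,x_2)=\int_{-\infty}^{x_1}\phi_2(t,s,x_2)\,ds$. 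The condition $\dive\phi=0$ gives $\nabla^\perp\psi=\phi$ (and $\psi\in C^2_c$ since $\p_1\psi=\phi_2$, $\p_2\psi=-\phi_1$), while the compact support of $\psi$ follows because $\int_\IR\phi_2(t,s,x_2)\,ds$ is independent of $x_2$, again by $\dive\phi=0$, and vanishes outside the $x_2$-range of $\mathrm{supp}\,\phi$. Thus $\psi\mapsto\nabla^\perp\psi$ is a bijection between the scalar $C^2_c$ test functions of Definition~\ref{symvort} and the divergence-free vector $C^1_c$ test functions of Definition~\ref{symmetrization}.

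With this correspondence I would match the three terms. For the linear terms, integration by parts together with $\curl v=\w$ yields $\int\nabla^\perp f\cdot v\,dx=\int f(\p_2 v_1-\p_1 v_2)\,dx=-\int f\,\w\,dx$ for every $f\in C^1_c$; applying this with $f=\p_t\psi$, and with $f=\psi(0,\cdot)$ (using $\curl v^0=\w^0$), turns both the time term and the initial term of the velocity formulation into minus their vorticity counterparts. For the bilinear term I would use the orthogonality identity $a^\perp\cdot b^\perp=a\cdot b$ in $\IR^2$: since $\nabla^\perp\psi=(\nabla\psi)^\perp$,
\[
\bar H_{\nabla^\perp\psi}(t,x,y)=\tfrac12 K(x-y)^\perp\cdot\bigl((\nabla\psi(t,x))^\perp-(\nabla\psi(t,y))^\perp\bigr)=\tfrac12 K(x-y)\cdot\bigl(\nabla\psi(t,x)-\nabla\psi(t,y)\bigr)=-H_\psi(t,x,y).
\]
Collecting the three terms shows that the left-hand side of \eqref{weakformh} for $\phi=\nabla^\perp\psi$ equals $-1$ times the left-hand side of \eqref{weakform2} for $\psi$; since the correspondence is a bijection, one formulation holds for all its test functions if and only if the other does.

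For part~(2) I observe that the symmetrized and weak velocity formulations share the same time and initial terms, so they coincide exactly when, for every divergence-free $\phi\in C^1_c$,
\[
\int_0^T\int_{\IR^2}\nabla\phi:(v\otimes v)\,dxdt=-\int_0^T\int_{\IR^2}\int_{\IR^2}\bar H_\phi(t,x,y)\w(t,x)\w(t,y)\,dxdydt .
\]
This is an unconditional identity under the standing hypotheses, so both implications follow from it at once. To prove it I would mollify in space, $v_\var=v*\rho_\var=K*\w_\var$ with $\w_\var=\w*\rho_\var\in L^1$. For the smooth fields the identity $\dive(v_\var\otimes v_\var)=\w_\var v_\var^\perp+\nabla(|v_\var|^2/2)$ holds pointwise, and integrating by parts against the divergence-free $\phi$ kills the gradient (pressure) term, giving $\int\nabla\phi:(v_\var\otimes v_\var)=-\int\phi\cdot\w_\var v_\var^\perp$; the symmetrization computation \eqref{symm}, valid for the $L^1$ vorticity $\w_\var$, then rewrites the right-hand side as $-\int\int\bar H_\phi\,\w_\var\w_\var$. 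Letting $\var\to0$, the left-hand side converges because $v_\var\to v$ in $L^2_{\loc}$, hence $v_\var\otimes v_\var\to v\otimes v$ in $L^1_{\loc}$, while the right-hand side converges because $\w_\var\to\w$ in $L^1$ and $\bar H_\phi$ is bounded by \eqref{eq:estHphi}; a dominated convergence argument in $t$, using the $L^\infty_tL^2_{\loc}$ bound, handles the time integral.

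The main obstacle is the rigorous justification of the integrations by parts at the available low regularity. The product $v\otimes v$ is locally integrable precisely because of the hypothesis $v\in L^2_{\loc}$ in part~(2), whereas $\w v^\perp$ need not be locally integrable on its own, so the pointwise manipulation of $\dive(v\otimes v)$ cannot be carried out directly and must be performed on the mollified fields and recovered in the limit as above. In part~(1) the analogous care is confined to verifying that the stream function inherits compact support, which is the structural fact making the test-function correspondence a genuine bijection.
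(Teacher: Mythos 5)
Your proposal is correct and follows essentially the same route as the paper: part (1) is the paper's test-function correspondence $\phi\leftrightarrow\nabla^\perp\psi$ together with the identity $\bar H_{\nabla^\perp\psi}=-H_\psi$ (the paper uses $-\nabla^\perp\phi$, so the signs match directly), and part (2) is exactly the content of Lemma~\ref{lemma sym-weak}, which the paper also proves by mollifying $\w$ and $v$, using the identity $\dive(v\otimes v)=\w v^\perp+\nabla\frac{|v|^2}{2}$ and the computation \eqref{symm}, and passing to the limit. You merely spell out details the paper leaves implicit (the explicit stream function with compact support, and the $L^2_{\loc}$/$L^1$ limit passage); the only point treated more carefully in the paper is that the stream function built from a $C^1$ field need not be $C^2$ in time, which is handled there by a one-line approximation argument.
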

\begin{proof}
For (1), taking a test function of the form $-\nabla^\perp \phi$ in \eqref{weakformh}
we see that a solution to the symmetrized velocity formulation
is also a solution to the symmetrized vorticity formulation,
indeed one has $\bar{H}_{-\nabla^\perp\phi}={H}_\phi$.
The converse is also true since all functions $\bar\phi\in C^2_c([0,T)\times \IR^2,\IR^2)$ with $\dive\bar\phi=0$ can be written
$\bar\phi=-\nabla^\perp\phi$ for some $\phi\in C^2_c([0,T)\times \IR^2)$. For $\bar\phi$ only $C^1$
one just approximates it by a $C^2$ function.
It follows that Definitions \ref{symmetrization} and \ref{symvort} are indeed equivalent.
Finally, the statement (2) follows from the next lemma.
\end{proof}

\begin{lemma}\label{lemma sym-weak}
Let $\w\in L^1(\IR^2)$, define $v=K*\w$ with $K$ the Biot-Savart kernel \eqref{biotsavartkernelK},
and assume that $v\in L^2_{\loc}(\IR^2)$. Then for all $\phi\in C^1_c(\IR^2,\IR^2)$ with $\dive\phi=0$, we have
\begin{equation}
\int\limits_{\IR^2} \int\limits_{\IR^2} \bar{H}_\phi(x,y)\w(x)\w(y) \,dxdy
=-\int\limits_{\IR^2}\nabla\phi(x):\bigl(v(x)\otimes v(x)\bigr) \, dx
\end{equation}
where $\bar{H}_\phi$ is given by \eqref{barh}.
\end{lemma}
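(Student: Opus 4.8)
The plan is to prove the identity first for smooth, mollified data and then to pass to the limit, using the hypothesis $v\in L^2_{\loc}(\IR^2)$ precisely to handle the right-hand side. I would fix a standard mollifier $\rho_\var$ and set $v_\var=v*\rho_\var$ and $\w_\var=\w*\rho_\var$. Since $v=K*\w$ and $K=K_1+K_2$ with $K_1\in L^1$, $K_2\in L^\infty$, convolution associates and one has $v_\var=K*\w_\var$; both $v_\var$ and $\w_\var$ are then smooth, $\w_\var\to\w$ in $L^1(\IR^2)$, and $v_\var\to v$ in $L^2_{\loc}(\IR^2)$ by the assumption on $v$. From $\dive v=0$ and $\curl v=\w$ in $\ID'$ (item (II) of Section~\ref{regv}) one also gets $\dive v_\var=0$ and $\curl v_\var=\w_\var$ everywhere.

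For the mollified data the identity is a direct computation. As $\phi\in C^1_c$ and $v_\var\otimes v_\var\in C^1$, integrating by parts and using $\dive v_\var=0$ together with the elementary identity $\dive(v_\var\otimes v_\var)=v_\var\cdot\nabla v_\var=\w_\var v_\var^\perp+\nabla(|v_\var|^2/2)$ gives
\[
\int_{\IR^2}\nabla\phi:(v_\var\otimes v_\var)\,dx=-\int_{\IR^2}\phi\cdot\bigl(\w_\var v_\var^\perp\bigr)\,dx,
\]
where the pressure-type gradient term drops out precisely because $\dive\phi=0$. Inserting $v_\var^\perp(x)=\int K(x-y)^\perp\w_\var(y)\,dy$, applying Fubini (legitimate since $\phi$ has compact support, $\w_\var\in L^1\cap L^\infty$, and $K\in L^1+L^\infty$) and symmetrizing in $(x,y)$ using $K(-z)^\perp=-K(z)^\perp$ exactly as in \eqref{symm}, I would obtain
\[
\int_{\IR^2}\nabla\phi:(v_\var\otimes v_\var)\,dx=-\int_{\IR^2}\int_{\IR^2}\bar{H}_\phi(x,y)\,\w_\var(x)\w_\var(y)\,dxdy.
\]

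It then remains to let $\var\to0$. On the right, $\w_\var\to\w$ in $L^1$ yields $\w_\var(x)\w_\var(y)\to\w(x)\w(y)$ in $L^1(\IR^2\times\IR^2)$, and since $\bar{H}_\phi$ is bounded by \eqref{eq:estHphi} the double integral converges to $\int\int\bar{H}_\phi\,\w\,\w$. On the left, $v_\var\to v$ in $L^2_{\loc}$ gives $v_\var\otimes v_\var\to v\otimes v$ in $L^1_{\loc}$ (by Cauchy--Schwarz on the support of $\nabla\phi$), and as $\nabla\phi$ is bounded with compact support the integral converges to $\int\nabla\phi:(v\otimes v)$. Matching the two limits proves the claim; note that only $\phi\in C^1_c$ is needed.

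The one genuinely delicate point is the convergence of the left-hand side, and this is exactly where $v\in L^2_{\loc}$ must be used. A naive approximation of $\w$ by compactly supported $L^1$ data $\w_n$ would only give $K*\w_n\to v$ in $M^2(\IR^2)$, by \eqref{M2}, which is too weak to pass to the limit in the quadratic term $v_n\otimes v_n$. Mollifying the velocity directly circumvents this: $v*\rho_\var\to v$ in $L^2_{\loc}$ is immediate from the hypothesis, while the associated vorticities $\w*\rho_\var$ converge in $L^1$ at the same time, so both sides pass to the limit consistently.
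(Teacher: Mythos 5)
Your proof is correct and follows essentially the same route as the paper: establish the identity for mollified data via the pointwise relation $\dive(v\otimes v)=\w\,v^\perp+\nabla\frac{|v|^2}{2}$ together with the symmetrization computation \eqref{symm}, then pass to the limit using $\w_\var\to\w$ in $L^1$ for the double integral and $v_\var\to v$ in $L^2_{\loc}$ for the quadratic term. The paper's proof is just a condensed version of exactly this argument, so there is nothing further to compare.
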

\begin{proof} For smooth $\w$ and $v$, the formula is just the weak form
of the already mentioned identity $\dive(v \otimes v ) = \w \, v^\perp  + \nabla\frac{ |v|^2}{2}$,
taking into account the computation \eqref{symm}. The general case follows easily
by smoothing $\w$ and $v$ by a regularizing kernel and passing to the limit.
\end{proof}

\subsection{Lagrangian solutions}
We describe now a third class of weak solutions which are transported by a measure-preserving flow in an ``almost everywhere'' sense.
When the velocity is not globally bounded, the associated flow $X$ is not locally integrable in $\IR^2$,
thus the ODE defining the flow has to be taken in the renormalized sense.

Let us recall the following general definition on $\IR^N$.
Assume that a vector field $b(t,x):(0,T)\times\irn\rightarrow\irn$ can be decomposed as
\begin{equation}\label{r1}\tag{{\bf R1}}
\frac{b(s,x)}{1+|x|}=\tilde{b}_1(s,x)+\tilde{b}_2(s,x),
\end{equation}
with
\begin{equation}
\tilde{b}_1\in L^1((0,T);L^1(\irn)),\,\,\,\tilde{b}_2\in L^1((0,T);L^\infty(\irn)).
\end{equation}

\begin{defin}\label{deffloweuler}
If $b$ is a vector field satisfying {\bf (R1)}, then for fixed $t\in[0,T)$, a map $X(s,t,x)$ satisfying
\begin{equation}
	(s,x)\mapsto X(s, t, x) \in C([t,T]_s;L^0_{\loc}(\irn_x))\cap \mathcal{B}([t,T]_s;\log L_{\loc}(\irn_x))
	\label{eq:regX}
\end{equation}
is a regular Lagrangian flow (in the renormalized sense) relative to $b$ starting at $t$ if we have the following:

\begin{enumerate}[(i)]
\item The equation 
\begin{equation}\label{ode} \p_s\big(\beta(X(s,t,x))\big)=\beta ' (X(s,t,x)) b(s,X(s,t,x))
\end{equation}
holds in $\ID'((t,T)\times\irn)$, for every function $\beta\in C^1(\irn;\IR)$ that satisfies
$|\beta(z)|\leq C(1+\log(1+|z|))$ and $|\beta'(z)|\leq C/(1+|z|)$ for all $z\in\irn$, for some constant $C$,
\item  $X(t,t,x)=x$ for $\La^N$-a.e $x\in\irn$, 
\item There exists a compressibility constant $L\geq 0$ such that $\int_{\irn} \varp(X(s,t,x)) \, dx\leq L\int_{\irn} \varp(x) \, dx$ for all measurable $\varp:\irn\rightarrow[0,\infty)$.  
\end{enumerate}
\end{defin}
By now this is the usual definition of flows for weakly differentiable vector fields satisfying the general growth condition {\bf(R1)}.

We next consider the condition that the components of $\nabla b$ can be written as singular integrals of $L^1$ functions,
\begin{equation}\label{r2}\tag{{\bf R2}}
\partial_j b^i=S_j^i g_j^i\quad\mbox{in }{\mathcal D}'((0,T)\times\IR^N),
\end{equation}
where $S_j^i$ are singular integral operators in $\IR^N$, and $g_j^i\in L^1((0,T)\times\IR^N)$.

Finally we consider the conditions
\begin{equation}\label{R3}\tag{{\bf R3}}
b\in L^p_{\loc}([0,T]\times\IR^N)\quad\mbox{for some }p>1,
\end{equation}
and
\begin{equation}\label{R4}\tag{{\bf R4}}
\dive b\in L^1((0,T);L^\infty(\IR^N)).
\end{equation}
According to \cite{jhde}, under the assumptions {\bf(R1)}, {\bf(R2)}, {\bf(R3)}, {\bf(R4)},
a regular Lagrangian flow $X$ as in Definition \ref{deffloweuler}, except that now $s\in[0,T]$ instead of $s\in[t,T]$
(the forward-backward flow defined in Corollary 6.6 in \cite{jhde}) exist and is unique and stable.\\

With this notion of flow, we can define in accordance with \cite{jhde} our class of Lagrangian solutions $(\w,v)$ to the Euler equations by the relation
\begin{equation}\label{pushforwardomega}
	\w(t,x) = \w^0\Bigl(X(   s=0,t,x ) \Bigr), \qquad \mbox{for all } t\in [0,T].
\end{equation}

\begin{defin}[Lagrangian solution]\label{lagrangiansol}
Let $(\w^0,v^0)\in  L^1(\IR^2)\times M^2(\IR^2)$ with $\w^0=\curl v^0$.   We say the couple $(\w,v)$ is a Lagrangian solution to \eqref{vorticityformsmooth} in $[0,T]$ with initial data  $(\w^0,v^0)$, if
\begin{enumerate}
\item  $\w\in  C([0,T];L^1(\IR^2))$,
  \item the velocity field $v$ is given by the convolution in \eqref{biotsavart},
\item for all $t\in [0,T]$, $\w$ is given by the formula in \eqref{pushforwardomega}, where $X$ is the regular Lagrangian flow associated to $v$.
\end{enumerate}
 \end{defin}
Note that according to the properties stated in Section \ref{regv}, the vector field $b=v$
satisfies the properties {\bf(R1)}, {\bf(R2)}, {\bf(R3)}, {\bf(R4)}, with $g_j^i=\w$, justifying the existence of $X$.
We remark that according to \cite{jhde}, Lagrangian solutions in the sense of Definition \ref{lagrangiansol}
are also renormalized solutions in the sense of Definition \ref{renorm}.

\section{Strong stability of Lagrangian flows} 
We now recall from \cite{jhde} the following stability result.
It gives a quantitative estimate in measure on the flows difference,
in terms of the $L^1$ norm of the difference of the vector fields. 

\begin{theorem}[Fundamental estimate for flows]\label{fundest} Let $b$ and $\bar{b}$ be two vector fields,
$b$ satisfying assumptions {\bf (R1)-(R3)} and $\bar{b}$ satisfying only {\bf (R1)}.
Fix $t\in [0,T)$ and let $X$ and $\bar{X}$ be regular Lagrangian flows starting at time $t$
associated to $b$ and $\bar{b}$ respectively, with compressibility constants $L$ and $\bar{L}$.
Then for every $\gamma>0$ and $r>0$, and for every $\eta>0$ there exist $\lambda>0$ and $C_{\gamma,r,\eta}>0$ such that
\begin{equation}
\La^N(B_r\cap \{|X(s,\cdot)-\bar{X}(s,\cdot)|>\gamma\})\leq C_{\gamma,r,\eta}\|b-\bar{b}\|_{L^1((0,T)\times B_\lambda)}+\eta \qquad \textrm{for all } s\in [t,T].
\label{eq:mainest}
\end{equation}
The constant $\lambda$ and $C_{\gamma,r,\eta}$ depend on $\gamma,r,\eta$ and on the bounds on the operator norms of $S_j^i$, the norms involved in the estimates from {\bf (R1)} and {\bf (R3)}, the compressibility
constants $L$ and $\bar{L}$ of $X$ and $\bar{X}$, and the equi-integrability of $g_j^i$ from assumption {\bf (R2)} for $b$.
\end{theorem}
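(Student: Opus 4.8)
The plan is to adapt the logarithmic functional technique of Crippa--De Lellis \cite{cdel}, in the form refined in \cite{jhde} so as to accommodate the endpoint integrability $g_j^i\in L^1$. Fix $s\in[t,T]$ and a small parameter $\delta>0$, and introduce
\[
\Phi_\delta(s)=\int_{B_r}\log\Bigl(1+\frac{|X(s,t,x)-\bar{X}(s,t,x)|}{\delta}\Bigr)\,dx .
\]
Since $X(t,t,\cdot)=\bar{X}(t,t,\cdot)=\mathrm{id}$ we have $\Phi_\delta(t)=0$. Working first with regularizations of the flows (so that $\p_s$ may be taken under the integral via the renormalized ODE \eqref{ode}) and passing to the limit, I would differentiate in $s$ to get $\Phi_\delta'(s)\leq\int_{B_r}\frac{|b(s,X)-\bar{b}(s,\bar{X})|}{\delta+|X-\bar{X}|}\,dx$ and split the numerator as $\bigl(b(s,X)-b(s,\bar{X})\bigr)+\bigl(b(s,\bar{X})-\bar{b}(s,\bar{X})\bigr)$. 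This separates an \emph{error} term, measuring $b-\bar{b}$, from a \emph{transport} term involving only the good field $b$; the local integrability {\bf (R3)} of $b$ guarantees that $b(s,\bar{X})$ is well defined and that these manipulations are legitimate.

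For the error term I would bound the denominator below by $\delta$ and change variables through $\bar{X}$, using the compressibility bound (iii) for $\bar{X}$. After restricting to the set where $\bar{X}(s,\cdot)$ remains in a large ball $B_\lambda$ --- whose complement within $B_r$ has small measure by the superlevel-set estimates for regular Lagrangian flows, and is absorbed into $\eta$ --- this yields a contribution of the form $\tfrac{\bar{L}}{\delta}\|b-\bar{b}\|_{L^1((0,T)\times B_\lambda)}$, which is exactly the right-hand side of \eqref{eq:mainest} up to the factor $\delta^{-1}$ that will be handled at the end.

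For the transport term the key input is the difference-quotient estimate for fields whose gradient is a singular integral of an $L^1$ function \cite{cdel,jhde}: there is an operator $U$, bounded from $L^1(\IR^N)$ into the weak space $M^1(\IR^N)$, such that $|b(s,x)-b(s,y)|\leq C|x-y|\,\bigl(Ug(s,x)+Ug(s,y)\bigr)$ for a.e.\ $x,y$, where $g$ collects the $g_j^i$ of {\bf (R2)}. Inserting this bounds the transport integrand by $C\bigl(Ug(s,X)+Ug(s,\bar{X})\bigr)$, and a change of variables by $X$ and $\bar{X}$, using the compressibility constants $L,\bar{L}$, reduces matters to controlling $\int_{B_\lambda}Ug(s,\cdot)$. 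Here lies the crux, and the main obstacle of the whole argument: because $g$ is only $L^1$, the operator $U$ lands merely in weak-$L^1$, so $Ug$ is not integrable and the standard Gronwall step of the $L^p$, $p>1$, theory fails.

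I would resolve this endpoint exactly as in \cite{jhde} by exploiting the \emph{equi-integrability} of the $g_j^i$. Given the target accuracy $\eta$, split $g=g^{\flat}+g^{\sharp}$ with $\|g^{\flat}\|_{L^1}$ as small as desired and $g^{\sharp}\in L^\infty$, hence $Ug=Ug^{\flat}+Ug^{\sharp}$. On the bounded part $Ug^{\sharp}$ lies in $L^2_{\loc}$, so the Crippa--De Lellis Gronwall step applies and yields a finite, $\delta$-independent bound on the transport contribution to $\Phi_\delta(s)$; the singular part $Ug^{\flat}$, being small in $M^1$, contributes only a superlevel set of small measure, again absorbed into $\eta$ through its weak-$L^1$ norm. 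Collecting the two terms gives $\Phi_\delta(s)\leq\tfrac{C}{\delta}\|b-\bar{b}\|_{L^1((0,T)\times B_\lambda)}+C'$ up to an additive $\eta/2$, with $C'$ a fixed transport constant. Finally, Chebyshev's inequality gives
\[
\La^N\bigl(B_r\cap\{|X(s,\cdot)-\bar{X}(s,\cdot)|>\gamma\}\bigr)\leq\frac{\Phi_\delta(s)}{\log(1+\gamma/\delta)},
\]
and choosing $\delta$ small enough makes the denominator large, forcing the fixed term $C'/\log(1+\gamma/\delta)$ below $\eta/2$ while leaving the factor $C_{\gamma,r,\eta}=C/(\delta\log(1+\gamma/\delta))$ in front of $\|b-\bar{b}\|_{L^1}$. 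The dependence of $\lambda$ and $C_{\gamma,r,\eta}$ on the operator norms of the $S_j^i$, the {\bf (R1)}, {\bf (R3)} norms, the compressibility constants $L,\bar{L}$, and the modulus of equi-integrability of the $g_j^i$ is then read off from the splitting and the changes of variables above.
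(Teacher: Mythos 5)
First, a point of reference: the paper itself contains no proof of Theorem~\ref{fundest} --- it is recalled verbatim from \cite{jhde} --- so your sketch can only be measured against the proof in that reference, which it explicitly tries to reconstruct. Your skeleton does match it: the logarithmic functional, the split of the integrand into an error term (denominator bounded below by $\delta$, change of variables by $\bar X$ using only its compressibility, truncation to $B_\lambda$ with the complement absorbed into $\eta$ via the \textbf{(R1)} superlevel bounds --- correctly explaining why $\bar b$ needs only \textbf{(R1)}) and a transport term, the difference-quotient operator $U$ bounded from $L^1$ into $M^1$ (note this composition bound, maximal function of a singular integral, is itself a key nontrivial result of \cite{jhde}, not of \cite{cdel}), the equi-integrable splitting $g=g^\flat+g^\sharp$, and the final Chebyshev step with the splitting chosen before $\delta$.

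The genuine gap sits exactly at the step you yourself call the crux: the contribution of the weak part $Ug^\flat$. You assert that, being small in $M^1$, it ``contributes only a superlevel set of small measure, absorbed into $\eta$ through its weak-$L^1$ norm.'' This does not work as stated. The quantity whose superlevel sets must be controlled is the \emph{time integral} $\int_t^s\bigl[Ug^\flat(\tau,X(\tau,\cdot))+Ug^\flat(\tau,\bar X(\tau,\cdot))\bigr]\,d\tau$, and an $M^1$ bound at each fixed time does not pass to the time integral: $M^1$ is not normed, and averages of functions of unit $M^1$ norm can have arbitrarily large $M^1$ norm (the classical Stein--Weiss logarithmic loss for sums of weak-$L^1$ functions). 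This failure is precisely what makes the $L^1$ endpoint hard. The resolution in \cite{jhde} is different: one retains \emph{both} available bounds on the transport integrand,
\begin{equation*}
\frac{|b(\tau,X)-b(\tau,\bar X)|}{\delta+|X-\bar X|}
\leq \min\Bigl\{\frac{|b(\tau,X)|+|b(\tau,\bar X)|}{\delta},\,
C\bigl(Ug^\flat(\tau,X)+Ug^\flat(\tau,\bar X)\bigr)\Bigr\}
+C\bigl(Ug^\sharp(\tau,X)+Ug^\sharp(\tau,\bar X)\bigr),
\end{equation*}
and applies, at each fixed time and \emph{before} integrating in $\tau$, an interpolation lemma for $\int\min\{u,v\}$ with $u\in M^1$ small and $v\in M^p$, $p>1$ (this is where \textbf{(R3)} enters, with $\|v\|_{M^p}$ of order $1/\delta$). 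This produces a bound of order $\varepsilon\bigl(1+\log(1/(\delta\varepsilon))\bigr)$ with $\varepsilon=\|g^\flat\|_{L^1}$, which is an honest $L^1$-in-space bound and can then be integrated in time.

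This also corrects the logic of your final step. After dividing by $\log(1+\gamma/\delta)$, the term coming from $Ug^\flat$ does \emph{not} tend to zero as $\delta\to 0$: the ratio of logarithms tends to $1$, so this term survives as a quantity of size $C\varepsilon$. It is therefore not ``absorbed by choosing $\delta$ small''; it is controlled only because the equi-integrability allows $\varepsilon$ to be fixed small \emph{first}, and only afterwards is $\delta$ sent to zero to kill the $\delta$-independent constant coming from $Ug^\sharp$. You got the order of the parameter choices right, but attributed the smallness to the wrong mechanism; without the $\min$/interpolation step --- the one ingredient in which \cite{jhde} genuinely goes beyond \cite{cdel} --- the argument has a hole at its central point.
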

In previous literature (see again \cite{Majda}), strong $L^1_{\loc}$ convergence of smoothed velocities was guaranteed for initial data $v^0$ belonging to $L^2_{\loc}(\IR^2)$.
In order to allow for solutions with infinite kinetic energy, we bypass this assumption and use the weaker $M^2$ estimate arising in \eqref{M2}.
As seen in the estimate \eqref{eq:mainest}, we need the strong convergence in
time and space of the vector field. We have a priori only compactness in space, and we shall
therefore use a general argument to deal with only weak convergence in time.
We shall show a posteriori that given equi-integrable vorticity data, the associated velocities are indeed strongly compact in time and space.
An alternative way to get compactness is also explained in Remark \ref{rem time-compact}.\\

We now expand in the following Proposition~\ref{weakstability} a remark from \cite{cdel} proving that \emph{weak} convergence is sufficient for stability,
and adapt the proof from the setting of Sobolev regularity to the regularity given by {\bf(R2)}.
We begin with two lemmas, the first arising from standard analysis.

\begin{lemma} \label{tauh}
Let $K$ be the Biot-Savart kernel \eqref{biotsavartkernelK}, and denote by $\tau_h K(x)  = K(x+h)$.
Then for any $1<p<2$ and all $h\in\IR^2$ one has
\begin{equation}
\|\tau_h K-K\|_{L^p(\IR^2)} \leq c_p |h|^\alpha
\end{equation}
with $\alpha=2/p-1>0$. In particular, the linear mapping $L^1(\IR^2)\to L^1_{\loc}(\IR^2)$ defined by $g\mapsto K*g $ is a compact operator.
\end{lemma}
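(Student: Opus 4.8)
The plan is to separate the statement into two parts: the quantitative translation estimate on $K$, and the compactness of $g\mapsto K*g$, which I would deduce from the first part together with the Riesz--Fr\'echet--Kolmogorov criterion.

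For the estimate, the key structural fact is that $K$ is positively homogeneous of degree $-1$, i.e.\ $K(\lambda x)=\lambda^{-1}K(x)$ for $\lambda>0$, and rotationally covariant, $K(Rx)=RK(x)$ for every rotation $R$ (because $\perp$ is the rotation by $\pi/2$). First I would substitute $x=|h|z$ and set $e=h/|h|$, obtaining
\[
\|\tau_h K-K\|_{L^p(\IR^2)}^p=\int_{\IR^2}|K(x+h)-K(x)|^p\,dx
=|h|^{2-p}\int_{\IR^2}|K(z+e)-K(z)|^p\,dz,
\]
so that $\|\tau_h K-K\|_{L^p}=|h|^{(2-p)/p}\,I_p(e)^{1/p}$ with $(2-p)/p=2/p-1=\alpha$, and $I_p(e):=\int_{\IR^2}|K(z+e)-K(z)|^p\,dz$. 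By the rotational covariance and the change of variables $z\mapsto Rz$, the quantity $I_p(e)$ is independent of the unit vector $e$, hence equals a constant depending only on $p$; this gives $c_p=I_p^{1/p}$. It then remains to check $I_p<\infty$ for $1<p<2$, by splitting $\IR^2$ into a neighbourhood of $z=0$, a neighbourhood of $z=-e$, and the exterior region. Near $z=0$ the term $K(z+e)$ is bounded and $|K(z)|\sim |z|^{-1}$, so the integrand behaves like $|z|^{-p}$, integrable in $\IR^2$ exactly when $p<2$; the point $z=-e$ is symmetric. At infinity, the fundamental theorem of calculus and the homogeneity of $\nabla K$ of degree $-2$ give $|K(z+e)-K(z)|\lesssim |z|^{-2}$, so the integrand is $\lesssim |z|^{-2p}$, integrable precisely when $p>1$. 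Thus both endpoint conditions on $p$ are sharp and $I_p<\infty$ for $1<p<2$.

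For the compactness, I would first give meaning to $K*g$ and record boundedness using the decomposition $K=K_1+K_2$ with $K_1=K\mathbf 1_{B_1}\in L^1$ and $K_2=K\mathbf 1_{B_1^c}\in L^\infty$: by Young's inequality $K*g\in L^1+L^\infty$, and for $\|g\|_{L^1}\le1$ the family $\{K*g\}$ is bounded in $L^1(B_R)$ for every $R$. Next I would establish uniform equicontinuity of translations on $B_R$. Writing $\tau_h(K*g)-K*g=(\tau_h K-K)*g$, Young's inequality with the exponents $(p,1)\mapsto p$ and the estimate just proved give $\|(\tau_h K-K)*g\|_{L^p(\IR^2)}\le c_p|h|^\alpha\|g\|_{L^1}$, and then H\"older on the finite-measure set $B_R$ yields
\[
\sup_{\|g\|_{L^1}\le1}\|\tau_h(K*g)-K*g\|_{L^1(B_R)}
\le |B_R|^{1-1/p}\,c_p\,|h|^\alpha\xrightarrow[h\to0]{}0 .
\]
By the local form of the Riesz--Fr\'echet--Kolmogorov theorem (boundedness together with uniform equicontinuity of translations implies relative compactness in $L^1(B_R)$ for bounded $B_R$, with no tightness needed), the image of the unit ball is relatively compact in each $L^1(B_R)$; a diagonal argument over $R\to\infty$ then gives relative compactness in $L^1_{\loc}(\IR^2)$, which is the asserted compactness of the operator.

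The main obstacle I anticipate is that $\tau_h K-K\notin L^1(\IR^2)$ (it decays only like $|z|^{-2}$ at infinity), so the naive route of estimating $\|(\tau_h K-K)*g\|_{L^1}$ by $L^1\!*\!L^1$ Young fails; the device that makes everything work is precisely to pass through $L^p$ with $1<p<2$—where the translation estimate is available and sharp—and to convert back to $L^1$ using the finite measure of the target balls $B_R$ via H\"older. The remaining care is in the elementary but endpoint-sensitive verification that $I_p<\infty$, which pins down both restrictions $p<2$ (local singularity) and $p>1$ (decay at infinity).
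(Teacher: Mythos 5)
Your proof is correct. The compactness half is essentially the paper's own argument: write $\tau_h(K*g)-K*g=(\tau_h K-K)*g$, apply Young's inequality $L^p*L^1\to L^p$ together with the translation estimate, pass from $L^p$ to $L^1$ on bounded sets, and conclude with Riesz--Fr\'echet--Kolmogorov; the paper compresses your H\"older step and the diagonal argument over $R$ into the phrase ``uniformly small in $L^1_{\loc}$'', but the content is identical. Where you genuinely differ is the translation estimate itself: the paper does not prove it, citing instead Lemma 8.1 of \cite{vpe}, whereas you give a self-contained proof from the two structural symmetries of the Biot--Savart kernel --- homogeneity of degree $-1$, which scales out $|h|$ and produces the exponent $\alpha=2/p-1$ exactly, and rotational covariance, which makes the constant independent of the direction $h/|h|$ --- reducing everything to the finiteness of the single integral $I_p$, checked via the local singularity $|z|^{-p}$ (forcing $p<2$) and the decay $|z|^{-2p}$ at infinity obtained from the mean value theorem and the degree $-2$ homogeneity of $\nabla K$ (forcing $p>1$). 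This buys a lemma that is self-contained, identifies the constant as $c_p=I_p^{1/p}$, and explains why $1<p<2$ is precisely the right range. One cosmetic point: your ``exterior region'' must also cover the intermediate set where $|z|$ is of order one but away from $0$ and $-e$; there the integrand is bounded and the region has finite measure, so nothing is lost, but the splitting should say so explicitly.
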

\begin{proof}
See Lemma 8.1 in \cite{vpe}  for the proof of the first inequality.
Next, denote $Tg=K*g$, and take an exponent$1<p<2$. Whenever $\|g\|_{L^1(\IR^2)}\leq 1$,
$Tg$ is bounded in $L^1_{\loc}$ and one has
\begin{equation}\label{tauvn}
\begin{aligned}
\left\| \tau_h (Tg)-Tg  \right\|_{L^p(\IR^2)}
& =   \left\|\left(\tau_h K-K\right)*g  \right\|_{L^p(\IR^2)}   \\
&\leq \left\|  \tau_h K-K \right\|_{L^p(\IR^2)}    \\
&\leq  c_p  |h|^\alpha . 
\end{aligned}
\end{equation}
Thus $\tau_h (Tg) - Tg$ is uniformly small in $L^1_{\loc}$ as $h\to 0$. Applying the Riesz-Fr\'echet-Kolmogorov criterion gives the result.
\end{proof}

The second lemma states that given classical flows associated to Lipschitz vector fields, weak convergence of the vector fields suffices for the associated flows to converge uniformly.

\begin{lemma}\label{remarkcdel} Let $b_n$ be a sequence of vector fields uniformly bounded in  $L^\infty((0,T)\times\IR^N)$
with $ \nabla_x b_n$ uniformly bounded in $L^\infty((0,T)\times \IR^N)$. 
Assume that there exists a vector field $b\in L^\infty((0,T)\times\IR^N)$ with $\nabla_x b\in L^\infty((0,T)\times \IR^N)$,
such that $b_n\rightharpoonup^* b$ in $L^\infty ((0,T)\times\IR^N)-{\mathrm w}*$.
Let $X_n(s,t,x)$ and $X(s,t,x)$ be Lagrangian flows (in the DiPerna-Lions sense)
associated to $b_n$ and $b$. Then $X_n(s,t,x)\to X(s,t,x)$ in $L^\infty((0,T)^2; L^\infty_{\loc}(\IR^N))$.
\end{lemma}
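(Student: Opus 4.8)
The plan is to reduce the whole statement, via a Gronwall argument, to the uniform (in the starting point and in the two times) smallness of the time integral of $b_n-b$ evaluated along the characteristics of the \emph{limit} field $b$, and then to prove that smallness by combining an equicontinuity estimate with the weak$*$ convergence transported through a change of variables. Throughout I set $M:=\sup_n\|b_n\|_{L^\infty}$ and $L_0:=\sup_n\|\nabla_x b_n\|_{L^\infty}$, noting that weak$*$ lower semicontinuity gives $\|b\|_{L^\infty}\leq M$ and $\|\nabla_x b\|_{L^\infty}\leq L_0$. Since $b_n$ and $b$ are Lipschitz in space with constant $L_0$ and bounded by $M$, the DiPerna--Lions flows coincide with the classical Cauchy--Lipschitz flows and solve $X_n(s,t,x)=x+\int_t^s b_n(\tau,X_n(\tau,t,x))\,d\tau$ (and likewise for $X$).

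First I write the difference and insert $b_n(\tau,X(\tau,t,x))$, obtaining
$$ X_n(s,t,x)-X(s,t,x)=\int_t^s\bigl[b_n(\tau,X_n)-b_n(\tau,X)\bigr]\,d\tau+c_n(s,t,x), $$
where $c_n(s,t,x):=\int_t^s[b_n-b](\tau,X(\tau,t,x))\,d\tau$. The first integral is bounded by $L_0\int_t^s|X_n-X|\,d\tau$, so Gronwall's lemma yields, for every compact $K\subset\IR^N$,
$$ \sup_{x\in K}\ \sup_{s,t\in[0,T]}\ |X_n(s,t,x)-X(s,t,x)|\ \leq\ e^{L_0 T}\,\|c_n\|_{L^\infty([0,T]^2\times K)}. $$
Hence it suffices to prove $\|c_n\|_{L^\infty([0,T]^2\times K)}\to 0$ for every such $K$. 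Next I show that the family $\{c_n\}$ is equi-Lipschitz and uniformly bounded on $[0,T]^2\times K$: boundedness and Lipschitz continuity in $s$ come from $\|b_n\|_\infty,\|b\|_\infty\leq M$; for the dependence in $x$ and $t$ one uses that $X(\tau,t,\cdot)$ is Lipschitz with constant $\leq e^{L_0T}$ and, through the identity $X(\tau,t',x)=X(\tau,t,X(t,t',x))$, Lipschitz in $t$ with a uniform constant, which combined with the spatial Lipschitz bound on $b_n-b$ gives equi-Lipschitz estimates for $c_n$. By Arzel\`a--Ascoli the sequence $\{c_n\}$ is precompact in $C([0,T]^2\times K)$.

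The heart of the argument is to identify every uniform subsequential limit of $\{c_n\}$ as the zero function. Fix $s,t$ and $\psi\in C_c(\IR^N)$ with $\mathrm{supp}\,\psi\subset K$. By Fubini and the change of variables $y=X(\tau,t,x)$, whose inverse is $X(t,\tau,\cdot)$ and whose pushforward of Lebesgue measure has a bounded density $\rho(\tau,t,\cdot)$ (because $X(\tau,t,\cdot)$ is bi-Lipschitz, equivalently $\dive b\in L^\infty$), I rewrite
$$ \int_{\IR^N}\psi(x)\,c_n(s,t,x)\,dx=\int_0^T\int_{\IR^N}(b_n-b)(\tau,y)\,g_{s,t}(\tau,y)\,dy\,d\tau, $$
with $g_{s,t}(\tau,y)=\mathbf{1}_{[t,s]}(\tau)\,\psi(X(t,\tau,y))\,\rho(\tau,t,y)$. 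The crucial observation is that $g_{s,t}$ depends only on the limit flow $X$ and on $\psi$, is bounded, and is supported in $[0,T]\times K'$ for a fixed compact $K'$ (trajectories issued from $K$ travel at speed $\leq M$); hence $g_{s,t}\in L^1((0,T)\times\IR^N)$, and the weak$*$ convergence $b_n\rightharpoonup^* b$ forces the right-hand side to $0$. Therefore any uniform limit $c$ of a subsequence satisfies $\int_{\IR^N}\psi\,c(s,t,\cdot)=0$ for all $\psi$ and all $s,t$, so $c\equiv 0$ by continuity. Since every subsequence admits a further subsequence converging uniformly to $0$, the whole sequence satisfies $\|c_n\|_{L^\infty([0,T]^2\times K)}\to 0$, and the Gronwall bound of the first step gives convergence in $L^\infty((0,T)^2;L^\infty_{\loc}(\IR^N))$.

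The main obstacle is precisely this last step: weak$*$ convergence only controls pairings against a fixed $L^1$ function, whereas $c_n$ evaluates $b_n-b$ along the measure-zero trajectory $\tau\mapsto X(\tau,t,x)$, on which weak convergence says nothing directly. The change of variables is what converts this degenerate evaluation into an honest $L^1$ pairing against the $n$-independent weight $g_{s,t}$, and the equicontinuity estimate (Arzel\`a--Ascoli) is what upgrades the resulting weak-in-$x$ convergence to the uniform convergence demanded by the Gronwall closure. I expect the only delicate bookkeeping to be the boundedness of the pushforward density and the uniform Lipschitz dependence of the characteristics on the base time $t$, both of which follow from the $L^\infty$ bounds on $b$ and $\nabla_x b$ together with the flow identity.
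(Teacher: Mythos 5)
Your proof is correct, but it follows a genuinely different route from the paper's. The paper works directly on the flow maps: from the uniform bounds on $b_n$ and $\nabla_x b_n$ it gets $X_n$ uniformly bounded in $\mathrm{Lip}([0,T]^2\times\IR^N)$ (using the ODE for $\partial_s X_n$ and the transport equation $\partial_t X_n + b_n\cdot\nabla_x X_n=0$ for $\partial_t X_n$), extracts a locally uniform limit $Y$ by Arzel\`a--Ascoli, and then identifies $Y=X$ by passing to the limit in the transport equation via the rewriting $b_n\cdot\nabla_x X_n=\dive(X_n\otimes b_n)-X_n\,\dive b_n$ (product of a weak-$*$ convergent and a uniformly convergent factor) together with uniqueness of solutions to the limit transport equation. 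You instead run a Lagrangian argument: Gronwall reduces everything to the uniform smallness of the error $c_n(s,t,x)=\int_t^s(b_n-b)(\tau,X(\tau,t,x))\,d\tau$ along the \emph{limit} trajectories; Arzel\`a--Ascoli is applied to $\{c_n\}$ rather than to $\{X_n\}$; and the subsequential limits are identified as zero by the change of variables $y=X(\tau,t,x)$, whose bounded Jacobian turns the degenerate evaluation along trajectories into an honest pairing of $b_n-b$ against a fixed $L^1$ weight, where weak-$*$ convergence applies. Both proofs hinge on the same underlying principle---making the dependence on $b_n$ linear so that weak convergence suffices, and upgrading weak-in-$x$ information to uniform convergence by equicontinuity---but they implement it differently: the paper's divergence-form trick delegates the identification step to DiPerna--Lions uniqueness for the transport PDE and is shorter; your change-of-variables argument is more elementary and self-contained (only Cauchy--Lipschitz uniqueness, Gronwall, and the pushforward density bound are needed) and yields the explicit quantitative estimate $\sup_{s,t,x\in K}|X_n-X|\leq e^{L_0T}\|c_n\|_{L^\infty([0,T]^2\times K)}$, which isolates exactly where the weak convergence enters. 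The bookkeeping points you flag (boundedness of the Jacobian density, Lipschitz dependence on the base time via the group property $X(\tau,t',x)=X(\tau,t,X(t,t',x))$) are indeed the only delicate spots, and both follow from the uniform $L^\infty$ bounds on $b$ and $\nabla_x b$ as you indicate.
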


\begin{proof}
It follows from the uniform bounds on $b_n$ and $\nabla_x b_n$ that $\nabla_x X_n$ is uniformly bounded, with
\begin{equation}
	|\nabla_x X_n(s,t,x)|\leq \exp \big( T \|\nabla_x b_n\|_{L^\infty((0,T) \times \IR^N)} \big).
	\label{eq:estgradXn}
\end{equation}
Then, the ODE for $X_n$ implies that $\p_s X_n$ is uniformly bounded. Also, the transport equation
 \begin{equation}\label{pdeforXn}
 \p_t \, X_n +b_n(t,x)\cdot\nabla_x X_n =0
 \end{equation}
implies also that $\p_t X_n$ is bounded. Thus up to modifying $X_n$ on a Lebesgue negligible set, $X_n$ is uniformly bounded in $\textrm{Lip} ([0,T]^2\times\IR^N)$.
By Arzel\`a-Ascoli's theorem there exists $Y(s,t,x)\in \textrm{Lip} ([0,T]^2\times\IR^N)$ such that up to a subsequence $X_n(s,t,x)\to Y(s,t,x)$ locally uniformly in $[0,T]^2\times\IR^N$.
Using the identity $b_n(t,x)\cdot\nabla_x X_n=\dive (X_n\otimes b_n)-X_n\,\dive b_n$, it follows from the uniform convergence of $X_n$ and the weak convergence of $b_n$ and $\dive b_n$
that we can pass to the limit in \eqref{pdeforXn}, so that by the uniqueness of solutions to the transport equation we must have $Y=X$.
\end{proof}

Lemmas \ref{tauh} and \ref{remarkcdel}, together with Theorem \ref{fundest},  yield the following stability result for Lagrangian flows,
which states that weak convergence of the velocity fields implies that the associated flows converge strongly anyway.

\begin{propos}\label{weakstability} Let $(v_n)$ be a sequence of divergence free velocity fields uniformly bounded in $L^\infty((0,T);M^2(\IR^2))$.
Assume  that $v_n\rightharpoonup v$ in $\ID'((0,T)\times\IR^2)$, where $v\in L^\infty((0,T);M^2(\IR^2))$ is divergence free.
Assume additionally that $\curl v_n$ is uniformly equi-integrable in $L^1((0,T)\times \IR^2)$.
Let $X_n$ be the regular Lagrangian flows associated to $v_n$, and $X$  the one associated to $v$.
Then $X_n$ converges locally in measure to $X$, uniformly in $s$ and $t$.
\end{propos}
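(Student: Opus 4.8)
The plan is to bypass the lack of compactness in time---which prevents a direct use of Theorem~\ref{fundest}, since that estimate requires genuine strong convergence of the fields in $L^1((0,T)\times B_\lambda)$---by interposing spatially mollified velocities that are Lipschitz in $x$. For such regularized fields the weak convergence inherited from $v_n\rightharpoonup v$ upgrades to uniform convergence of the associated classical flows via Lemma~\ref{remarkcdel}, which absorbs the time variable; the mollification error is then controlled \emph{uniformly in $n$} by combining Theorem~\ref{fundest} with the compactness of $g\mapsto K*g$ from Lemma~\ref{tauh}.

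Fix a standard mollifier $\rho_\var$ in $x$ and set $v_n^\var=\rho_\var*v_n=K^\var*\w_n$, where $\w_n=\curl v_n$ and $K^\var=\rho_\var*K$. Since $\dive v_n=0$ and $v_n\in L^\infty((0,T);L^1(\IR^2)+L^\infty(\IR^2))$ uniformly in $n$, each $v_n^\var$ is divergence free and, for fixed $\var$, uniformly in $n$ bounded in $L^\infty((0,T)\times\IR^2)$ together with $\nabla_x v_n^\var$; likewise $v^\var=\rho_\var*v$ (recall $v=K*\w$, $\w=\curl v\in L^\infty((0,T);L^1(\IR^2))$ by equi-integrability). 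As convolution with $\rho_\var$ is continuous on $\ID'$, from $v_n\rightharpoonup v$ we get $v_n^\var\rightharpoonup v^\var$, and the uniform $L^\infty$ bound promotes this to $v_n^\var\rightharpoonup^* v^\var$ in $L^\infty((0,T)\times\IR^2)$. Moreover $v_n-v_n^\var=(K-K^\var)*\w_n$, and writing $K-K^\var=\int\rho_\var(y)\,(K-\tau_{-y}K)\,dy$, Lemma~\ref{tauh} and Young's inequality give, for fixed $1<p<2$,
\begin{equation*}
\|v_n-v_n^\var\|_{L^p(\IR^2)}\leq\|K-K^\var\|_{L^p(\IR^2)}\,\|\w_n\|_{L^1(\IR^2)}\leq c_p\,\var^\alpha\,\|\w_n\|_{L^1(\IR^2)},\qquad\alpha=\tfrac{2}{p}-1>0.
\end{equation*}
Hence, by H\"older on $B_\lambda$, $\|v_n-v_n^\var\|_{L^1((0,T)\times B_\lambda)}\leq C(\lambda)\,\var^\alpha$ uniformly in $n$, and the same bound holds for $\|v-v^\var\|_{L^1((0,T)\times B_\lambda)}$.

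Now fix $\gamma,r>0$ and $\eta>0$, and estimate, for all $s,t$,
\begin{equation*}
\La^2\big(B_r\cap\{|X_n-X|>\gamma\}\big)\leq\La^2\big(B_r\cap\{|X_n-X_n^\var|>\tfrac{\gamma}{3}\}\big)+\La^2\big(B_r\cap\{|X_n^\var-X^\var|>\tfrac{\gamma}{3}\}\big)+\La^2\big(B_r\cap\{|X^\var-X|>\tfrac{\gamma}{3}\}\big),
\end{equation*}
where $X_n^\var,X^\var$ are the flows of $v_n^\var,v^\var$. For the first and third terms I apply Theorem~\ref{fundest} with $b=v_n$ (resp. $b=v$), which satisfies \textbf{(R1)}--\textbf{(R3)} with $g_j^i=\w_n$ (resp. $\w$), and $\bar b=v_n^\var$ (resp. $v^\var$), which is bounded and Lipschitz and hence satisfies \textbf{(R1)}; all compressibility constants equal $1$ by incompressibility. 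Crucially, since $\{\w_n\}$ is equi-integrable and $v_n$ is uniformly bounded in $M^2$ (hence in $L^p_{\loc}$), the structural quantities on which $\lambda$ and $C_{\gamma,r,\eta}$ depend are uniform in $n$, so there exist $\lambda$ and $C$, independent of $n$ and of $s,t$, with
\begin{equation*}
\La^2\big(B_r\cap\{|X_n-X_n^\var|>\tfrac{\gamma}{3}\}\big)\leq C\,\|v_n-v_n^\var\|_{L^1((0,T)\times B_\lambda)}+\eta,
\end{equation*}
and likewise for $|X^\var-X|$. By the previous paragraph we may choose $\var$ so small that $C\,\|v_n-v_n^\var\|_{L^1((0,T)\times B_\lambda)}\leq\eta$ for every $n$, and the same for the $v,v^\var$ term; thus the first and third terms are each $\leq2\eta$. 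With this $\var$ fixed, Lemma~\ref{remarkcdel} applies to the Lipschitz fields $v_n^\var\rightharpoonup^* v^\var$ and yields $X_n^\var\to X^\var$ in $L^\infty((0,T)^2;L^\infty_{\loc}(\IR^2))$ (the whole sequence converges by uniqueness of the limit), so for $n$ large the middle term vanishes, uniformly in $s,t$. Letting finally $\eta\to0$ gives $\sup_{s,t}\La^2(B_r\cap\{|X_n-X|>\gamma\})\to0$, i.e. convergence locally in measure, uniform in $s$ and $t$.

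The main difficulty is exactly the passage handled by this double approximation: Theorem~\ref{fundest} demands strong convergence of the fields in space-time, which is unavailable because $v_n\rightharpoonup v$ only weakly, with no control of time oscillations. The mechanism resolving it is the splitting into a mollification error---made uniform in $n$ by Lemma~\ref{tauh} and the uniform $L^1$ bound on $\w_n$, and absorbed through the \emph{uniform} constants of Theorem~\ref{fundest} granted by the equi-integrability of $\curl v_n$---and a Lipschitz-level term where Lemma~\ref{remarkcdel} converts weak-$*$ convergence into uniform flow convergence, thereby taking care of the time variable.
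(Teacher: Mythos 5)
Your proof is correct and follows essentially the same route as the paper's: the same three-term decomposition $X_n-X=(X_n-X_n^\var)+(X_n^\var-X^\var)+(X^\var-X)$, with Theorem~\ref{fundest} (and the equi-integrability of $\w_n$ giving constants uniform in $n$) controlling the two mollification terms, Lemma~\ref{tauh} making $\|v_n-v_n^\var\|_{L^1((0,T)\times B_\lambda)}\leq C\var^\alpha$ uniformly in $n$, and Lemma~\ref{remarkcdel} handling the middle term for fixed $\var$. The only cosmetic difference is that you transfer the mollification onto the kernel ($K-K^\var$ plus Young's inequality) where the paper uses Minkowski's inequality on $v_n^\var-v_n$ directly; both rest on the same translation estimate of Lemma~\ref{tauh}.
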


\begin{proof}
The assumptions imply that $\w_n\equiv\curl v_n$, $\w\equiv\curl v\in L^1((0,T)\times \IR^2)$,
$v_n=K*\w_n$, $v=K*\w$, thus the conditions {\bf(R1)}-{\bf(R4)} are satisfied
for $v_n$ and $v$, justifying the existence and uniqueness of $X_n$ and $X$.
We regularize $v_n$ and $v$ with respect to the spatial variable.
Take $\rho \in C_c^\infty(\IR^2)$ be the standard mollifier with spt$(\rho)\subset B_1$.
Denote by $\rho_\var(x)=\var^{-2}\rho(x/\var)$, and define
$$ 
v^\var_{n} =  v_{n}\mathop{*}_x \rho_\var, \qquad v^\var =  v\mathop{*}_x \rho_\var.
$$
Let $X_n^\var$ and $X^\var$ denote the DiPerna-Lions flows associated to $v^\var_{n}$ and $v^\var$ respectively,
as in Lemma \ref{remarkcdel}. Since $v^\var_{n}$ and $v^\var$ also satisfy {\bf(R1)}-{\bf(R4)},
it is easy to see that $X_n^\var$ and $X^\var$ are also the regular Lagrangian flows in the sense
of Definition \ref{deffloweuler}.
Then we write
\begin{equation}
	\begin{aligned}
 X_n -X & = (X_n-X_{n}^\var)+(X_{n}^\var-X^\var)+(X^\var-X) \\
& \equiv I + II + III. 
\end{aligned}
	\label{eq:decompX}
\end{equation}
By Theorem \ref{fundest} the term $III$ tends to zero locally in measure, uniformly in $s,t$, as $\var\to 0$.
For $I$, applying also Theorem \ref{fundest}, which is possible because the $\w_n$ are uniformly equi-integrable,
gives that for all $\gamma>0$, $r>0$, $\eta>0$, there exist $\lambda>0$ and $C>0$ such that
\begin{equation}\label{vne}
\La^2(B_r\cap \{|X_{n}^\var (s,t,\cdot)-X_n(s,t,\cdot)|>\gamma\})\leq C\|v^\var_{n}-v_n\|_{L^1((0,T)\times B_\lambda)}+\eta,
\end{equation}
for all $s,t\in [0,T]^2$.
Using Minkowski's inequality and applying Lemma~\ref{tauh}, we estimate
$$
\begin{aligned}
 \|v_{n}^\var- v_n\|_{L^1((0,T);L^p(\IR^2))}
 & =\int_0^T \left[\int\limits_{\IR^2}\left|\int\limits_{B_\var}[v_n(t,x-y)-v_n(t,x)\rho_\var(y)dy\right|^p dx\right]^{1/p} dt \\
&\leq \int_0^T  \int\limits_{B_\var}    \left[\int\limits_{\IR^2} |v_n(t,x-y)-v_n(t,x)|^pdx \right]^{1/p} |\rho_\var(y)|dy \, dt\\
&\leq c_p\,\|\w_n \|_{L^1((0,T);L^1(\IR^2))} \int\limits_{B_\var} |y|^\alpha|\rho_\var(y)| dy\\
&\leq C\,\var^\alpha.
 \end{aligned}
$$
Thus the first term in the right-hand side of \eqref{vne} tends to zero  as $\var\to 0$, uniformly in $n$. 
We deduce that the terms $I$ and $III$ can be made arbitrarily small independently of $n$, for a suitable choice of $\var$.
Once such $\var$ is chosen, we observe that we can apply Lemma \ref{remarkcdel}
to the vector fields $v_{n}^\var$ and $v^\var$.
We deduce that $X_n^\var\to X^\var$ locally uniformly in $s,t,x$, as $n\to\infty$,
which concludes the proof of the Proposition.
\end{proof}

\section{Existence and stability of Lagrangian solutions to the Euler system}

We now apply the stability results for Lagrangian flows derived in the previous section
in order to get stability and existence of Lagrangian solutions to the Euler equations.

\begin{theorem}[Stability of Lagrangian solutions]\label{compactness}
Let $(\w_n,v_n)  \in  C([0,T];L^1(\IR^2))\times \break C([0,T];M^2(\IR^2))$ be a sequence of Lagrangian solutions to the Euler equations (Definition \ref{lagrangiansol})
associated to uniformly in $n$ equi-integrable initial vorticities $\w_n^0$.
Let $X_n$ denote the regular Lagrangian flows associated to $v_n$.
Then, up to the extraction of a subsequence, there exists $(\w,v) \in  C([0,T];L^1(\IR^2))\times C([0,T];M^2(\IR^2))$
such that $v$ is associated to $\w$ by the convolution formula \eqref{biotsavart} and
\begin{itemize}
\item[(1)] $X_n\rightarrow X$ locally in measure, uniformly in $s,t$, where $X$ is the regular Lagrangian flow associated to $v$.
\end{itemize}
In addition,
\begin{itemize}
\item[(2)] If $\w_n^0\rightharpoonup \w^0$ weakly in $L^1(\IR^2)$, then $\w_n\rightharpoonup \w$ in $  C([0,T];L^1(\IR^2)-{\mathrm w})$,
\item[(3)] If $ \w^0_n\to \w^0 $ strongly in $L^1(\IR^2)$, then $\w_n\to\w$ in $  C([0,T];L^1(\IR^2)-{\mathrm s})$,
\item[(4)] $v_n\to v$ strongly in $C([0,T];L^1_{\loc}(\IR^2)) $.
\end{itemize}
Moreover, $(\w,v)$ is a Lagrangian solution to the Euler system with initial vorticity $\w^0$.
\end{theorem}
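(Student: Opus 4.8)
The plan is to combine the a priori bounds coming from measure preservation with the flow-stability result of Proposition~\ref{weakstability}, and then to pass to the limit in the Lagrangian representation \eqref{pushforwardomega}. First I would record the uniform bounds. Since each $v_n$ is divergence free, the flow $X_n$ preserves the Lebesgue measure (compressibility constant equal to $1$), so the law of $\w_n(t,\cdot)=\w_n^0(X_n(0,t,\cdot))$ under $\La^2$ coincides with that of $\w_n^0$ for every $t$. Consequently $\|\w_n(t)\|_{L^1}=\|\w_n^0\|_{L^1}$ and the uniform equi-integrability of $\w_n^0$ transfers to $\w_n$, uniformly in $t$ and $n$; the sublinear growth {\bf (R1)} of $v_n$ controls the spreading of mass and hence preserves tightness. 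By \eqref{M2} the velocities $v_n$ are then uniformly bounded in $L^\infty((0,T);M^2(\IR^2))$, hence in $L^\infty((0,T);L^p_{\loc}(\IR^2))$ for $1\le p<2$. Passing to a subsequence, I extract $\w_n^0\rightharpoonup\w^0$ and $\w_n\rightharpoonup\w$ weakly in $L^1$ by Dunford--Pettis, and set $v:=K*\w$. Since $g\mapsto K*g$ is compact from $L^1$ to $L^1_{\loc}$ (Lemma~\ref{tauh}), complete continuity forces $v_n=K*\w_n\to K*\w$ strongly in $L^1_{\loc}$ in space, which identifies $v$ as the $\ID'$-limit of $v_n$, divergence free, with $\curl v=\w$ and $v\in L^\infty((0,T);M^2(\IR^2))$.

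With $v_n\rightharpoonup v$, the uniform $M^2$ bound, and the equi-integrability of $\curl v_n=\w_n$ in hand, Proposition~\ref{weakstability} applies directly and yields $X_n\to X$ locally in measure, uniformly in $s,t$, where $X$ is the regular Lagrangian flow of $v$. This is assertion (1); taking $s=0$ gives $X_n(0,t,\cdot)\to X(0,t,\cdot)$ locally in measure, uniformly in $t$, which is exactly the ingredient needed to pass to the limit in \eqref{pushforwardomega}.

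To identify the limit I would pass to the limit in the representation formula. Using measure preservation I rewrite $\int\w_n(t)\psi\,dx=\int\w_n^0(y)\,\psi(X_n(t,0,y))\,dy$ for $\psi\in C_c(\IR^2)$. The composition $\psi\circ X_n(t,0,\cdot)$ is bounded and converges in measure to $\psi\circ X(t,0,\cdot)$, uniformly in $t$, while $\w_n^0\rightharpoonup\w^0$; the standard weak--strong product lemma, which uses precisely the uniform integrability of $\w_n^0$, then gives $\int\w_n(t)\psi\to\int\w^0(X(0,t,\cdot))\psi$, uniformly in $t$. Since such $\psi$ determine the weak-$L^1$ limit on the equi-integrable family, this proves $\w_n\rightharpoonup\w$ in $C([0,T];L^1-{\mathrm w})$ with $\w(t)=\w^0(X(0,t,\cdot))$, i.e.~assertion (2). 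Under the stronger hypothesis $\w_n^0\to\w^0$ in $L^1$, I would instead estimate $\|\w_n(t)-\w(t)\|_{L^1}\le\|\w_n^0-\w^0\|_{L^1}+\|\w^0\circ X_n(0,t,\cdot)-\w^0\circ X(0,t,\cdot)\|_{L^1}$, the first term vanishing by measure preservation and the second by continuity of composition (approximating $\w^0$ by continuous compactly supported functions and using $X_n\to X$ in measure, uniformly in $t$); this yields assertion (3).

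For assertion (4) I would transfer the uniform-in-$t$ weak convergence of $\w_n$ through the compact operator $K*$: were $v_n\to v$ in $C([0,T];L^1_{\loc})$ to fail, a subsequence with $t_n\to t^*$ would give $\w_n(t_n)\rightharpoonup\w(t^*)$ (from the uniform weak convergence and the weak time-continuity of $\w$), whence complete continuity of $K*$ forces $v_n(t_n)\to v(t^*)$ and $v(t_n)\to v(t^*)$ strongly, contradicting the assumed gap. Finally I verify that $(\w,v)$ is a Lagrangian solution in the sense of Definition~\ref{lagrangiansol}: $v=K*\w$ holds by construction, $\w(t)=\w^0(X(0,t,\cdot))$ was shown above, and $\w\in C([0,T];L^1)$ strongly because $\w^0\in L^1$ is fixed and $t\mapsto X(0,t,\cdot)$ is continuous in measure by the flow regularity \eqref{eq:regX}, composition continuity then also giving $v\in C([0,T];M^2)$ via \eqref{M2}. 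The main obstacle throughout is the absence of any uniform modulus of continuity in time for the $v_n$, the time-oscillation problem emphasized in the introduction; this is precisely why I rely on Proposition~\ref{weakstability}, which needs only weak time convergence of the velocities, and recover all the uniform-in-$t$ (that is, $C([0,T];\cdot)$) statements a posteriori from the uniform-in-$s,t$ flow convergence together with measure preservation and the complete continuity of the Biot--Savart operator.
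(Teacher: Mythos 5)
Your proposal follows essentially the same route as the paper: uniform bounds and propagation of equi\-/integrability via measure preservation of the flows, Proposition~\ref{weakstability} to obtain the flow convergence (1), the push-forward representation combined with weak--strong product and approximation arguments for (2) and (3) (these are precisely the arguments of Propositions 7.7 and 7.3 of \cite{jhde}, which the paper invokes by citation), and the compact operator property of Lemma~\ref{tauh} for (4) and for identifying the limit as a Lagrangian solution.

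However, one intermediate assertion in your first paragraph is wrong, even though it turns out not to be load-bearing. From the Dunford--Pettis extraction $\w_n\rightharpoonup\w$ weakly in $L^1((0,T)\times\IR^2)$ you claim that ``complete continuity forces $v_n=K*\w_n\to K*\w$ strongly in $L^1_{\loc}$''. The operator $g\mapsto K*g$ is compact only with respect to the \emph{spatial} variable; acting on space-time functions it provides no regularization in $t$, so it is not compact from $L^1((0,T)\times\IR^2)$ into $L^1((0,T)\times B_R)$ and does not upgrade space-time weak convergence to strong convergence. Concretely, $\w_n(t,x)=\sin(nt)\,h(x)$ with $h\in L^1(\IR^2)$ converges weakly to $0$, while $K*\w_n=\sin(nt)\,(K*h)$ stays bounded away from $0$ in $L^1((0,T)\times B_R)$. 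This is exactly the time-oscillation obstruction emphasized in the introduction of the paper, and it is the very reason Proposition~\ref{weakstability} is formulated so as to require only \emph{weak} convergence of the velocities. Your proof survives because all you actually need at that stage is $v_n\rightharpoonup v=K*\w$ in $\ID'((0,T)\times\IR^2)$, which does follow from the weak $L^1$ convergence of $\w_n$: for $\phi\in C^\infty_c((0,T)\times\IR^2,\IR^2)$ one pairs $\w_n$ with the bounded function $(t,y)\mapsto\int K(x-y)\cdot\phi(t,x)\,dx$; alternatively one can extract $v_n\rightharpoonup^* v$ directly from the uniform $M^2$ bound, as the paper does. Strong convergence of $v_n$ in space \emph{and} time is only legitimately recovered a posteriori, exactly as in your argument for assertion (4): there the complete continuity of $K*$ is applied at fixed times $t_n\to t^*$, using the uniform-in-$t$ weak convergence from (2) and the strong time continuity of $\w$, and that argument is correct. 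With the first-paragraph claim weakened to $\ID'$ convergence, the proposal is correct and matches the paper's proof.
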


\begin{proof}
Since $v_n$ has zero divergence, the flow $X_n$ is measure preserving.
The formula \eqref{pushforwardomega} then implies that
$\|\w_n(t,\cdot)\|_{L^1(\IR^2)}=\|\w_n^0\|_{L^1(\IR^2)}$.
Thus $\w_n$ is uniformly bounded in $C([0,T];L^1(\IR^2))$, and $v_n$ is uniformly
bounded in $C([0,T];M^2(\IR^2))$.
Then, still the formula \eqref{pushforwardomega} implies that
$\w_n(t,\cdot)$ is uniformly in $n,t$ equi-integrable
(the smallness at infinity follows from the estimate of Remark 5.6 in \cite{jhde}).
In particular, $\w_n$ is uniformly equi-integrable in $L^1((0,T)\times\IR^2)$.

Then, according to the bound on $v_n$, up to a subsequence one has $v_n\rightharpoonup ^* v$
in $L^\infty((0,T);L^p_{\loc}(\IR^2))-{\mathrm w}*$ for $1<p<2$, with $v\in L^\infty((0,T);M^2(\IR^2))$.
Applying Proposition \ref{weakstability} gives point (1).
Extracting a new subsequence if necessary, one has $\w_n^0\rightharpoonup \w^0$ weakly in $L^1(\IR^2)$,
for some $\w^0\in L^1(\IR^2)$. One can define then the Lagrangian solution $\w\in C([0,T];L^1(\IR^2))$ to the transport
equation with initial data $\w^0$ by $\w(t,x)=\w^0(X(0,t,x))$.
Since $\w_n(t,x)=\w_n^0(X_n(0,t,x))$, the convergence of $X_n$ yields point (2) by the
same arguments as in Proposition 7.7 in \cite{jhde}. Point (3) works also with the
arguments of Proposition 7.3 in \cite{jhde}.
We deduce then that $v_n=K*\w_n\to K*\w$ in $C([0,T];L^1_{loc}(\IR^2))$, because
of the compact operator property stated in Lemma \ref{tauh}.
We deduce that $v=K*\w\in C([0,T];M^2(\IR^2))$ and point (4).
The definition of $\w$ concludes that $(\w,v)$ is a Lagrangian solution to the Euler system
with initial vorticity $\w^0$.
\end{proof}

\begin{corol}[Existence]\label{existencelagrangian} 
Let $(\w^0,v^0)\in L^1(\IR^2)\times M^2(\IR^2)$ with $\dive\,v^0=0$ and $\w^0=\curl v^0$.
Then there exists a Lagrangian solution $(\w,v)\in C([0,T];L^1(\IR^2))\times C([0,T];M^2(\IR^2))$
to the Euler system with initial data $(\w^0,v^0)$. 
\end{corol}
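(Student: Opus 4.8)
The plan is to combine a routine approximation with the stability result of Theorem~\ref{compactness}, which does essentially all of the work. First I would choose a sequence of smooth, compactly supported initial vorticities $\w^0_n$ converging to $\w^0$ strongly in $L^1(\IR^2)$; such a sequence is produced by a standard truncation-and-mollification procedure. Since a strongly convergent sequence in $L^1$ is automatically uniformly equi-integrable (both the absence of concentration on small sets and the smallness at infinity follow from the strong convergence combined with the equi-integrability of the single limit $\w^0$), the hypothesis of Theorem~\ref{compactness} on the initial data is satisfied.

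Next I would produce, for each $n$, a Lagrangian solution $(\w_n,v_n)$ with initial datum $\w^0_n$. Because $\w^0_n\in L^1(\IR^2)\cap L^\infty(\IR^2)$ has compact support, the classical Yudovich theory \cite{yudovich} provides a unique solution whose velocity $v_n=K*\w_n$ is bounded and log-Lipschitz in space (using the decomposition \eqref{vdecomp} for boundedness and the singular integral representation \eqref{Sw} for the log-Lipschitz modulus). The associated flow $X_n$ is therefore a well-defined H\"older continuous homeomorphism which, being measure preserving by $\dive v_n=0$, coincides by uniqueness with the regular Lagrangian flow of Definition~\ref{deffloweuler}; the vorticity is transported along it, $\w_n(t,x)=\w^0_n(X_n(0,t,x))$. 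Hence $(\w_n,v_n)$ is a Lagrangian solution in the sense of Definition~\ref{lagrangiansol}, lying in $C([0,T];L^1(\IR^2))\times C([0,T];M^2(\IR^2))$.

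Finally I would apply Theorem~\ref{compactness} to the sequence $(\w_n,v_n)$. Up to a subsequence it yields a limit $(\w,v)\in C([0,T];L^1(\IR^2))\times C([0,T];M^2(\IR^2))$ with $v=K*\w$ that is itself a Lagrangian solution, and since $\w^0_n\to\w^0$ strongly in $L^1$, point~(3) together with the concluding assertion of Theorem~\ref{compactness} identifies its initial vorticity as exactly $\w^0$. Because $v(0)=K*\w^0$, and the Biot-Savart representation $v^0=K*\w^0$ is forced by $\w^0=\curl v^0$, $\dive v^0=0$ together with the $M^2$ decay, the limit has the prescribed initial datum $(\w^0,v^0)$, which is the assertion of the corollary. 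The step demanding the most care is \emph{not} the passage to the limit, which is entirely absorbed into Theorem~\ref{compactness}, but rather the verification that the regularized data genuinely give Lagrangian solutions in the precise renormalized sense of Definition~\ref{lagrangiansol}; this reduces to matching the classically transported vorticity with the push-forward formula \eqref{pushforwardomega} through the uniqueness of the regular Lagrangian flow guaranteed by the theory of \cite{jhde}.
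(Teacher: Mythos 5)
Your proposal is correct and follows essentially the same route as the paper: regularize the initial vorticity to get smooth data converging strongly in $L^1$, invoke classical existence theory to produce solutions that are in particular Lagrangian solutions, and then apply the stability result (Theorem~\ref{compactness}) to pass to the limit. The paper's proof is terser (it only mollifies, without truncation, and does not spell out the equi-integrability of the regularized data or the identification of the classical flow with the regular Lagrangian flow), but the extra verifications you supply are exactly the ones implicitly used there.
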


\begin{proof}
Let $\rho(x)\in C_c^\infty(\IR^2)$ be a standard mollifier,
and consider $\w_n^0=\rho_n*\w^0$, $v_n^0=K*\w^0_n$.
Then $\w^0_n\rightarrow \w^0$ in $L^1(\IR^2)$ and for each $n$ there exists a classical smooth solution $(\w_n,v_n)$ to
the Euler system with initial data $(\w_n^0,v_n^0)$.
For each $n$ it is also a Lagrangian solution.
Thus applying Theorem \ref{compactness}, we obtain at the limit a Lagrangian solution $(\w,v)$
with initial data $(\w^0,v^0)$. 
\end{proof}

\section{Lagrangian renormalized symmetrized solutions}

A byproduct of Theorem \ref{compactness} is the strong compactness of smooth solutions
to the Euler system, provided that the initial vorticities are uniformly equi-integrable.
The limit of such smooth sequences of solutions give rise to Lagrangian, renormalized (because all Lagrangian solutions are renormalized), symmetrized solutions.
However, we do not know if any Lagrangian solution $(\w,v)$ is necessary a symmetrized solution.
We therefore define solutions which are Lagrangian as well as symmetrized solutions.
They include in particular smooth solutions.

\begin{defin}[Lagrangian symmetrized solutions]\label{symmlagrangiansol}
Let $(\w^0,v^0)\in  L^1(\IR^2)\times M^2(\IR^2)$ with $\w^0=\curl v^0$.
We say the couple $(\w,v)$ is a Lagrangian symmetrized solution to the Euler system in $[0,T]$
with initial data  $(\w^0,v^0)$, if it is a Lagrangian solution in the sense of Definition \ref{lagrangiansol},
and $(\w,v)$ satisfies the formula \eqref{weakformh} where $\bar{H}_\phi$ is given by \eqref{barh}.
\end{defin}
According to Proposition \ref{prop equiv-sym-weak},
these solutions satisfy also the symmetrized vorticity formulation \eqref{weakform2}.
We have the following result.
\begin{propos} \label{compactnessofapprox} 
Let $(\w_n,v_n)$ be a sequence of Lagrangian symmetrized solutions to the Euler system 
and assume that $\w_n$ have uniformly in $n$ equi-integrable initial data $\w_n^0$.
Then up to a subsequence, $v_n(t,x)\to v(t,x)$ strongly in $C([0,T];L^1_{\loc}(\IR^2))$,
with $(\w,v)$ a Lagrangian symmetrized solution, and 
\begin{enumerate}
 \item If $\w_n^0\rightharpoonup \w^0$ weakly in $L^1(\IR^2)$, then $\w_n\rightharpoonup \w$ in $  C([0,T];L^1(\IR^2)-{\mathrm w})$.
 \item If $ \w^0_n\to \w^0 $ strongly in $L^1(\IR^2)$, then $\w_n\to\w$ in $  C([0,T];L^1(\IR^2)-{\mathrm s})$.
\end{enumerate}
\end{propos}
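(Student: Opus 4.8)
The plan is to read off everything except the symmetrized property directly from Theorem~\ref{compactness}, and to reduce the genuinely new content to verifying that the limit $(\w,v)$ satisfies the symmetrized velocity formulation \eqref{weakformh}. Since every Lagrangian symmetrized solution is in particular a Lagrangian solution, Theorem~\ref{compactness} applies verbatim to $(\w_n,v_n)$: after extracting a subsequence we obtain $(\w,v)\in C([0,T];L^1(\IR^2))\times C([0,T];M^2(\IR^2))$ with $v=K*\w$, the strong convergence $v_n\to v$ in $C([0,T];L^1_{\loc}(\IR^2))$ (point (4) there), the weak/strong dichotomy giving points (1) and (2) of the present statement (which are points (2) and (3) of Theorem~\ref{compactness}), and the fact that $(\w,v)$ is a Lagrangian solution. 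Along the way one records that $\w_n$ is bounded in $C([0,T];L^1(\IR^2))$ with $\|\w_n(t,\cdot)\|_{L^1}=\|\w_n^0\|_{L^1}$, that by \eqref{pushforwardomega} and Remark~5.6 in \cite{jhde} it is uniformly in $n,t$ equi-integrable, and that the extraction can be arranged (unconditionally) so that $\w_n(t,\cdot)\rightharpoonup\w(t,\cdot)$ weakly in $L^1(\IR^2)$ for each $t$ and $\w_n^0\rightharpoonup\w^0$ in $L^1(\IR^2)$. It thus remains only to pass to the limit in \eqref{weakformh}, which each $(\w_n,v_n)$ satisfies by assumption.

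The two linear terms are straightforward. The term $\int_0^T\int\p_t\phi\cdot v_n$ converges to $\int_0^T\int\p_t\phi\cdot v$ since $\p_t\phi$ is bounded with compact support and $v_n\to v$ in $C([0,T];L^1_{\loc}(\IR^2))$. For the initial term I write $v_n^0=K*\w_n^0$; as $\w_n^0\rightharpoonup\w^0$ weakly in $L^1(\IR^2)$ and the operator $g\mapsto K*g$ from $L^1(\IR^2)$ to $L^1_{\loc}(\IR^2)$ is compact by Lemma~\ref{tauh}, hence sends weakly convergent sequences to strongly convergent ones, we get $v_n^0\to v^0=K*\w^0$ in $L^1_{\loc}(\IR^2)$, whence $\int\phi(0,\cdot)\cdot v_n^0\to\int\phi(0,\cdot)\cdot v^0$.

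The main obstacle is the quadratic term, where only weak convergence of $\w_n$ is available and the product of weak limits is not the weak limit of the products; its convergence is exactly a concentration-cancellation statement. I would prove, for each fixed $t$, that
\[
g_n(t):=\int_{\IR^2}\int_{\IR^2}\bar{H}_\phi(t,x,y)\,\w_n(t,x)\w_n(t,y)\,dx\,dy\;\longrightarrow\;\int_{\IR^2}\int_{\IR^2}\bar{H}_\phi(t,x,y)\,\w(t,x)\w(t,y)\,dx\,dy,
\]
and then conclude by dominated convergence in $t$, the integrands being bounded by $\tfrac{1}{4\pi}\mathrm{Lip}(\phi(t,\cdot))\,\|\w_n(t,\cdot)\|_{L^1}^2\le C$ via \eqref{eq:estHphi}. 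For the pointwise-in-$t$ statement I split $\bar{H}_\phi$ with a smooth cutoff $\chi_\delta(x-y)$ vanishing for $|x-y|\le\delta$ and equal to $1$ for $|x-y|\ge2\delta$. On the near-diagonal region the bound \eqref{eq:estHphi} gives
\[
\Big|\int\!\!\int_{|x-y|\le 2\delta}\bar{H}_\phi\,\w_n\otimes\w_n\Big|\le C\int_{\IR^2}|\w_n(t,x)|\Big(\int_{B_{2\delta}(x)}|\w_n(t,y)|\,dy\Big)dx\le C\,\sigma(\delta)\,\|\w_n(t,\cdot)\|_{L^1},
\]
where $\sigma(\delta)\to0$ as $\delta\to0$ uniformly in $n,t$ by equi-integrability (and likewise for $\w$). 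On the complementary region $\bar{H}_\phi(t,\cdot,\cdot)\chi_\delta$ is bounded, continuous, and vanishing at infinity, so it lies in $C_0(\IR^2\times\IR^2)$ and I approximate it uniformly by finite sums of tensor products $\sum_k a_k(x)b_k(y)$ with $a_k,b_k\in C_0(\IR^2)$; the approximation error contributes at most $\|\cdot\|_\infty\|\w_n(t,\cdot)\|_{L^1}^2\le C\var$, while for each product $\big(\int a_k\w_n(t)\big)\big(\int b_k\w_n(t)\big)\to\big(\int a_k\w(t)\big)\big(\int b_k\w(t)\big)$ by the weak $L^1$ convergence of $\w_n(t,\cdot)$ (as $a_k,b_k\in L^\infty$). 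Letting $n\to\infty$ and then $\delta\to0$ yields the convergence of $g_n(t)$.

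Combining the three limits shows that $(\w,v)$ satisfies \eqref{weakformh}; being already a Lagrangian solution, it is a Lagrangian symmetrized solution, and points (1)--(2) were recorded in the first step. The delicate point throughout is the quadratic term: the boundedness of $\bar{H}_\phi$ up to the diagonal (so that concentrations are governed by equi-integrability rather than by a pointwise product) together with its decay at infinity is precisely what renders the symmetrized formulation stable under mere weak convergence of the vorticity.
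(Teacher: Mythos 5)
Your proposal is correct, and its skeleton coincides with the paper's: everything except the symmetrized property is read off from Theorem~\ref{compactness}, and the real work is passing to the limit in \eqref{weakformh}, with the linear terms easy and the quadratic term the crux. The difference lies in how that quadratic term is treated. The paper disposes of it in one line: it invokes the fact that $\w_n\rightharpoonup\w$ in $C([0,T];L^1(\IR^2)-{\mathrm w})$ (point (2) of Theorem~\ref{compactness}) implies $\w_n(t,x)\w_n(t,y)\rightharpoonup\w(t,x)\w(t,y)$ in $C([0,T];L^1(\IR^2\times\IR^2)-{\mathrm w})$, and then tests this tensorized weak convergence directly against the bounded function $\bar{H}_\phi$ --- no diagonal cutoff and no continuity of $\bar{H}_\phi$ are needed, since weak $L^1$ convergence permits arbitrary $L^\infty$ test functions. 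You instead prove the convergence of the quadratic term from scratch, Delort-style: a diagonal cutoff whose near-diagonal contribution is controlled by the uniform-in-$(n,t)$ equi-integrability of $\w_n(t,\cdot)$, a Stone--Weierstrass approximation of $\bar{H}_\phi\chi_\delta\in C_0(\IR^2\times\IR^2)$ by sums of tensor products tested with the fixed-$t$ weak convergence of $\w_n(t,\cdot)$, and finally dominated convergence in time. Your argument is essentially a proof (specialized to $\bar{H}_\phi$) of the tensorization lemma that the paper cites without proof; indeed that lemma's own proof rests on exactly the same ingredients, Dunford--Pettis equi-integrability plus tensor-product testing. So what you lose in brevity you gain in self-containedness, and your write-up makes explicit where the standing equi-integrability hypothesis actually enters --- ruling out concentration on the diagonal --- which is invisible in the paper's one-line citation. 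Your treatment of the linear terms (strong convergence of $v_n$ from point (4) of Theorem~\ref{compactness} for the $\partial_t\phi\cdot v_n$ term, and compactness of $g\mapsto K*g$ from Lemma~\ref{tauh} for the initial term) is also correct, and slightly more detailed than the paper's remark that they ``clearly converge''.
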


\begin{proof}
The convergence follows from Theorem \ref{compactness}.
The only new thing is that $(\w,v)$ is also symmetrized.
This follows by passing to the limit in the equation \eqref{weakformh} for $(\w_n,v_n)$.
The linear terms clearly converge, and the convergence of the nonlinear term follows from the boundedness of $\bar{H}_\phi$
and the convergence of $\w_n$ in $C([0,T];L^1(\IR^2)-{\mathrm w})$, that implies
the convergence of $w_n(t,x)w_n(t,y)$ in $C([0,T];L^1(\IR^2\times\IR^2)-{\mathrm w})$.
\end{proof}
Note that we are not able to pass to the limit in the renormalized formulation of Definition \ref{renorm},
unless strong convergence in $L^1(\IR^2)$ of $\w_n^0$ to $\w^0$ is assumed.\\

We finally conclude the existence of solutions to the Euler system in all the five senses defined in Section \ref{weaksoleulerl1}. 

\begin{propos}[Existence of Lagrangian symmetrized, and weak velocity solutions]
 Let $(\w^0,v^0)\in L^1(\IR^2)\times M^2(\IR^2)$ with $\w^0=\curl v^0$ and $\dive v^0=0$.
Then there exists a Lagrangian symmetrized solution $(\w,v)$ to the Euler system with initial data $(\w^0,v^0)$.
It is in particular a renormalized and vorticity symmetrized solution.

Under the additional assumption $v^0\in L^2_{\loc}(\IR^2)$, one can find $(\w,v)$
with the property $v\in L^\infty((0,T);L^2_{\loc}(\IR^2))$, and it is then a solution
to the weak velocity formulation \eqref{velocityform}.
\end{propos}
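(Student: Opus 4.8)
The plan is to re-run the approximation scheme of Corollary~\ref{existencelagrangian} while keeping track of the symmetrized formulation along the way. I set $\w_n^0=\rho_n*\w^0$ and $v_n^0=K*\w_n^0$ for a standard mollifier $\rho_n$; consistently with the setting (a divergence-free field in $M^2$ with curl $\w^0\in L^1$ is forced to be $K*\w^0$, since any difference would be a harmonic field in $M^2$, hence $0$), one has $v^0=K*\w^0$ and therefore $v_n^0=\rho_n*v^0$. Thus $\w_n^0\to\w^0$ strongly in $L^1(\IR^2)$ and $v_n^0\to v^0$ in $L^1_{\loc}(\IR^2)$. For each $n$ the smooth data generate a classical smooth solution $(\w_n,v_n)$, which is simultaneously a Lagrangian solution (Definition~\ref{lagrangiansol}) and a solution of the symmetrized velocity formulation \eqref{weakformh}, hence a Lagrangian symmetrized solution in the sense of Definition~\ref{symmlagrangiansol}. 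Since the $\w_n^0$ converge strongly in $L^1$ they are uniformly equi-integrable, so Proposition~\ref{compactnessofapprox} applies: up to a subsequence $v_n\to v$ in $C([0,T];L^1_{\loc}(\IR^2))$ and $\w_n\to\w$ strongly in $C([0,T];L^1(\IR^2))$, and the limit $(\w,v)$ is a Lagrangian symmetrized solution with initial datum $(\w^0,v^0)$. This gives the first assertion.

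For the ``in particular'' claims no further computation is needed: being a Lagrangian solution, $(\w,v)$ is renormalized (Definition~\ref{renorm}), since all Lagrangian solutions are renormalized; and being a Lagrangian symmetrized solution it satisfies the symmetrized vorticity formulation \eqref{weakform2} by the equivalence in Proposition~\ref{prop equiv-sym-weak}(1).

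For the final statement I would use the extra freedom in the approximation to upgrade the a priori velocity bound. Under $v^0\in L^2_{\loc}(\IR^2)$ the same mollification gives $v_n^0=\rho_n*v^0\to v^0$ in $L^2_{\loc}(\IR^2)$, so $(v_n^0)$ is bounded in $L^2_{\loc}$. The aim is then a uniform bound
\begin{equation*}
\sup_n\ \sup_{t\in[0,T]}\int_{B_R}|v_n(t,x)|^2\,dx\le C(R,T)\qquad\text{for every }R>0,
\end{equation*}
which, by weak-$*$ lower semicontinuity combined with the already established convergence $v_n\to v$, yields $v\in L^\infty((0,T);L^2_{\loc}(\IR^2))$. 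Once this is known, $(\w,v)$ is a symmetrized velocity solution with $v\in L^\infty((0,T);L^2_{\loc})$, and Proposition~\ref{prop equiv-sym-weak}(2) immediately gives that it solves the weak velocity formulation of Definition~\ref{velocityform}, i.e.\ equation \eqref{distributionalform}.

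The main obstacle is exactly this uniform local energy estimate. As recalled in the introduction, a uniform $L^1$ bound on the vorticity only controls $v_n$ in $M^2$ (equivalently $L^q_{\loc}$ for $q<2$), and concentration may a priori destroy any $L^2_{\loc}$ bound; the splitting $v=K_1*\w+K_2*\w$ shows that the far field $K_2*\w_n$ is harmlessly bounded in $L^\infty$, so the whole difficulty concentrates on the near field $K_1*\w_n$, which lies only in $L^q_{\loc}$, $q<2$. The needed bound is the classical local kinetic energy estimate for approximate solutions of the 2D Euler system with $v^0\in L^2_{\loc}$, obtained from the local energy balance $\partial_t\frac{|v_n|^2}{2}+\dive\big((\tfrac{|v_n|^2}{2}+p_n)v_n\big)=0$ tested against a spatial cutoff (see \cite{dipernamajda1,Majda}); making this estimate uniform in $n$ — controlling the flux term via the velocity decomposition and the transport of the vorticity — is the technical heart of this part, and it is precisely what forces the conclusion to be phrased as ``one can find'' such a solution rather than a statement about every Lagrangian symmetrized solution.
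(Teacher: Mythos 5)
Your proposal is correct and follows essentially the same route as the paper: mollify $(\w^0,v^0)$ as in Corollary~\ref{existencelagrangian}, note the smooth solutions are Lagrangian symmetrized, apply Proposition~\ref{compactnessofapprox}, and in the case $v^0\in L^2_{\loc}(\IR^2)$ obtain a uniform $L^\infty((0,T);L^2_{\loc}(\IR^2))$ bound on the approximations so that Proposition~\ref{prop equiv-sym-weak}(2) yields a weak velocity solution. The only difference is presentational: the paper simply asserts the uniform local energy bound (and mentions as an alternative passing to the limit in \eqref{velocityform} via the arguments of Delort and Majda), whereas you correctly flag that this bound is the classical DiPerna--Majda local kinetic energy estimate and is the one genuinely nontrivial ingredient.
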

\begin{proof}
This follows from Proposition \ref{compactnessofapprox} after mollifying $(\w^0,v^0)$ as in Corollary \ref{existencelagrangian}.
When $v_0\in L^2_{\loc}$, the sequence of approximations is bounded in $L^\infty((0,T);L^2_{\loc}(\IR^2))$,
which yields at the limit $v\in L^\infty((0,T);L^2_{\loc}(\IR^2))$. One can then invoke
Proposition \ref{prop equiv-sym-weak} to conclude.
Another way to do is to use the arguments of \cite{delort,indiana}
to pass to the limit in the weak velocity formulation \eqref{velocityform}
(note anyway the identity provided by Lemma \ref{lemma sym-weak}).
\end{proof}
\begin{rmk}\label{rem time-compact}
In the context of Lagrangian symmetrized solutions, instead of using the general argument
of Proposition \ref{weakstability} to get stability of the flow,
it is possible to prove directly the compactness in time and space of the velocity,
by using $v=K*\w$, with $\w$ bounded in $L^\infty((0,T);L^1(\IR^2))$,
and the symmetrized vorticity formulation \eqref{weakform2} that implies
that $\partial_t\w\in L^\infty((0,T);\ID')$. These properties imply by Aubin's lemma
that $v$ is compact in $L^1_{\loc}((0,T)\times\IR^2)$.
\end{rmk}

\vspace{.3 cm}
\subsection*{Acknowledgments} This research has been partially supported by the SNSF grants 140232 and 156112.

\end{document}